\newtheorem{thm}{Theorem}[section]
\newtheorem{lem}[thm]{Lemma}
\newtheorem{prop}[thm]{Proposition}
\newtheorem{conj}[thm]{Conjecture}
\newtheorem{quest}[thm]{Question}
\theoremstyle{definition}
\newtheorem{Def}[thm]{Definition}
\newtheorem{rem}[thm]{Remark}
\newtheorem{ex}{Example}[section]
\newtheorem{step}{Step}
\numberwithin{equation}{subsection}
\def\Hom{{\text{\rm{Hom}}}}
\def\End{{\text{\rm{End}}}}
\def\tr{{\text{\rm{tr}}}}
\def\rchi{{\hbox{\raise1.5pt\hbox{$\chi$}}}}
\def\Aut{{\text{\rm{Aut}}}}
\def\isom{\cong}
\def\tensor{\otimes}
\def\dsum{\oplus}
\def\Ker{{\text{\rm{Ker}}}}
\def\a{\alpha}
\def\b{\beta}
\def\lam{\lambda}
\def\Jac{{\text{\rm{Jac}}}}
\def\Jac{{\text{\rm{Jac}}}}
\def\Prym{{\text{\rm{Prym}}}}
\def\Dol{{\text{\rm{Dol}}}}
\def\deR{{\text{\rm{deR}}}}
\def\Betti{{\text{\rm{Betti}}}}
\def\rank{{\text{\rm{rank}}}}
\def\Gauge{{\text{\rm{Gauge}}}}
\def\ad{{\text{\rm{ad}}}}
\newcommand{\bea}{\begin{eqnarray}}
\newcommand{\eea}{\end{eqnarray}}
\newcommand{\be}{\begin{equation}}
\newcommand{\ee}{\end{equation}}
\newcommand{\Mbar}{{\overline{\mathcal{M}}}}
\newcommand{\bA}{{\mathbb{A}}}
\newcommand{\bP}{{\mathbb{P}}}
\newcommand{\bC}{{\mathbb{C}}}
\newcommand{\bF}{{\mathbb{F}}}
\newcommand{\bL}{{\mathbb{L}}}
\newcommand{\bH}{{\mathbb{H}}}
\newcommand{\bR}{{\mathbb{R}}}
\newcommand{\bZ}{{\mathbb{Z}}}
\newcommand{\cM}{{\mathcal{M}}}
\newcommand{\cD}{{\mathcal{D}}}
\newcommand{\cH}{{\mathcal{H}}}
\newcommand{\cO}{{\mathcal{O}}}
\newcommand{\cU}{{\mathcal{U}}}
\newcommand{\la}{{\langle}}
\newcommand{\ra}{{\rangle}}
\newcommand{\half}{{\frac{1}{2}}}
\newcommand{\rar}{\rightarrow}
\newcommand{\lrar}{\longrightarrow}
\title{A journey from the Hitchin section to the oper moduli}
\author{Olivia Dumitrescu}
\address{
Olivia Dumitrescu:
Pierce Hall 209\\
Central Michigan University\\
Mount Pleasant 48859, Michigan}
\address{and Simion Stoilow Institute of Mathematics\\
Romanian Academy\\
21 Calea Grivitei Street\\
010702 Bucharest, Romania}
\email{dumit1om@cmich.edu}
\thanks{The author is a member of 
the Simion Stoilow Institute of Mathematics of the 
Romanian Academy.}
\subjclass[2010]{Primary: 58E15, 53C07.
Secondary: 14D21, 81T13}
\keywords{Hitchin's equations, moduli space of Higgs bundles, opers, Nonabelian Hodge correspondence, quantum curves}
\begin{document}

\maketitle

\centerline{\emph{To Anthony}}

\begin{abstract} 
This paper provides an introduction to
the mathematical notion of \emph{quantum curves}. 
We start with a concrete example arising from a 
graph enumeration problem. We then 
develop a theory of quantum curves associated
with Hitchin spectral curves. A conjecture of
Gaiotto, which predicts a new construction of
opers from a Hitchin spectral curve,
is explained.
We give a
step-by-step detailed description of the proof
of the conjecture  for the case of
rank $2$ Higgs bundles. Finally, we identify the two concepts of \textit{quantum curve} arising from the topological recursion formalism with the limit oper of Gaiotto's conjecture.

\end{abstract}

\setlength\intextsep{0pt}

\tableofcontents

\section{Introduction}

Mathematical research is  a journey. We start from
one place, often a remote place nobody cares. 
Guided by  mysteries one after another, we 
arrive at a place we have never imagined. We then 
suddenly realize that many people have come to
the same place, starting from totally different
origins. 

These are the lectures that the author has 
delivered in the last few years in many places of 
the world. They are meant to be an introduction to 
the notion of \emph{quantum curves}. Yet the
honest feeling that the author has now is that
these are more a record of how her 
understanding of  quantum curves has evolved.
The mathematics of quantum curves itself has been 
changing over the years. We have started from one 
place, based on what is known as 
\emph{topological recursion}. When we have 
arrived at the current position, we find 
ourselves  dealing with
\emph{opers}. 

The notion of \textit{quantum curves} was conceived in string theory by Aganagic, Dijkgraaf, Gukov, Hollands, Klemm, Marino, Sulkowski, Vafa, and others \cite{ADKMV,DHS,DHSV,GS}. We are far from 
establishing a complete theory at this moment. 
Yet we hope these lectures give a snapshot of
what is understood in the mathematics community
now, at least one 
of the many sides of the story of quantum curves.

This paper is organized as follows. In Section \ref{section 2}, we start from a simple question in enumerative geometry, and obtain the 
Dijkgraaf-Verlinde-Verlinde formula
\cite{DVV} for intersection numbers of $\psi$-classes on moduli space of stable curves $\Mbar_{g,n}$. More precisely, in Section \ref{eca cat}, we use the \textit{edge contraction} operations of ribbon graphs to generalize a count of graphs, and establish a recursion of \textit{Catalan numbers} of arbitrary genera. Then in Section \ref{GWP}, we present how the Laplace transform of the recursion of Catalan numbers surprisingly gives the DVV formula for intersection numbers on $\Mbar_{g,n}$. By the WKB analysis the recursion relation becomes equivalent to the \textit{quantization of the spectral curve} of the Catalan numbers. In Section \ref{thm:ECF}, we present how the same set of edge contraction operations on ribbon graphs give the \emph{cut-and-join equations} for orbifold Hurwitz numbers. 

We start with  presenting an introduction to the geometry of the Hitchin moduli spaces of holomorphic Higgs bundles and connections
in Section \ref{section 4}.  We then generalize the quantization theorem of Catalan recursion \ref{thm MS}, replacing the concept of spectral curves of Section \ref{spec curve} by the framework of Hitchin spectral curves. More precisely, following \cite{OD8, OD12,OD17},  we present the quantization results of spectral curves  for holomorphic and meromorphic Higgs bundles of rank $2$. Here, the algebro-geometric technique presented in Section \ref{spectral curve} was indispensable in quantizing \emph{singular} Hitchin spectral curves \cite{OD12}.

In Section \ref{section 5}, using the  work of Gunning \cite{Gun}, we propose to identify the two concepts: \textit{quantum curves} and \textit{opers}. The new idea of quantization in these notes is based on a recent solution \cite{OD20} of a conjecture due to the physicist  Gaiotto \cite{G}, presented in Sections 6 and 7. 

\subsection{Acknowledgments}

The author would like to express her gratitude to the organizers of \emph{String-Math 2016} held in 
Coll\`ege de France, Paris, and to the Institute Henri Poincar\'e, for their hospitality. These lecture notes grew out from the author's
paper \cite{OD20} in collaboration with L. Fredrickson, G. Kydonakis, R. Mazzeo, M. Mulase, and A. Neiztke, that solves a conjecture of Davide Gaiotto \cite{G}. This work was initiated at the AIM workshop, ``New perspectives on spectral data for Higgs bundles.'' The author also thanks the organizers of the workshop, in particular  Philip Boalch and Laura Schaposnik, for motivating interest in this problem by posing the question which led to this analysis.

The author is deeply indebted to Motohico Mulase for his generosity in mathematical discussions, enthusiasm, passion and encouragement that stimulated our collaboration throughout the years. This work could not have been produced without his support, for which the author would like to express all her gratitude.

The research of the author was supported by a grant from the Max-Planck Institute for Mathematics, Bonn. These lectures are based on a
collaboration  and discussions of the author with Motohico Mulase that took place in 2016 at the  Max-Planck Institute for Mathematics, Bonn,   and the Institute of Mathematics ``Simion Stoilow'' in
Bucharest. These lectures will be continued in \cite{OD21, OD22}.

\section{Enumeration of ribbon graphs}\label{section 2} 

\subsection{A combinatorial model for the moduli space of curves $\cM_{g,n}$}

\allowdisplaybreaks
\textit{Ribbon graphs}
are combinatorial objects  first used by G.~'t Hooft \cite{tH}  in quantum gauge theory, and later 
by Kontsevich
\cite{K1992}
in random matrix theory as the first approach to Gromov-Witten theory. They appeared independently in the work of Grothendieck \cite{Gro}
and are also known as \textit{dessins d'enfants}. A ribbon graph as a graph has a \textit{cyclic ordering} of the set of incident half-edges at each vertex and labeled faces. A ribbon graph embeds into an oriented
compact topological surface of type $(g,n)$, where $g$ represents the genus of the 
surface and $n$ the number of marked points
corresponding to the faces of the ribbon graph.
\begin{figure}[h]
\includegraphics[width=0.7\linewidth]{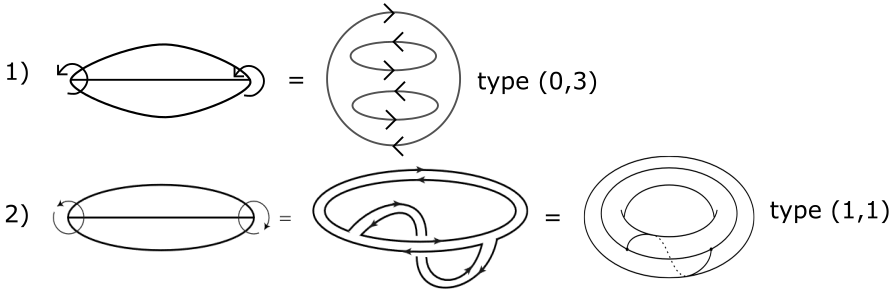}
\caption{}
\end{figure}

Decorating ribbon graphs by a positive real number on each edge fixes a topological surface of type $(g,n)$ together with \textit{a complex structure} on it. Introduce the total space of graphs as an orbifold parametrizing decorated ribbon graphs of a given
topological type $(g,n)$:
$$\mathcal{R}_{g,n} = \coprod_{\substack{\Gamma {\text{ boundary labeled}} \\ {\text{ribbon graph}}\\
{\text{of type }} (g,n) \\ {\text{and $e(\Gamma)$ edges}}}}
\frac{\bR_+ ^{e(\Gamma)}}{\Aut (\Gamma)}$$
\noindent
The space $\mathcal{R}_{g,n}$ is a smooth orbifold (see \cite[Section~3]{MP1998} and \cite{STT}). The combinatorial model of moduli space was constructed by  Thurston \cite{STT}, Harer \cite{Harer, HZ}, Mumford \cite{Mumford}, and Strebel \cite{Strebel} (cf. \cite{MP1998}). There exists an orbifold isomorphism between the total space of graphs  of type $(g,n)$ and the product of $ \bR_+^n$ and the moduli space $\cM_{g,n}$ of smooth algebraic curves of genus $g$ with $n$ marked points:
\be \label{orbifold iso}
\mathcal{R}_{g,n}\cong \cM_{g,n}\times \bR_+^n.
\ee

The isomorphism 
\eqref{orbifold iso} gives a 
cell-decompositions of the 
moduli space  $\cM_{g,n}$ for each
choice of $p\in \bR_+^n$,
and generalized Catalan numbers are related to a count of \textit{lattice points} in each cell-decomposition
for $p\in \bZ_+^n$. The isomorphism \eqref{orbifold iso} enables us to use the combinatorial model for the study of  topology of
$\cM_{g,n}$  via ribbon graphs and their geometry. Starting from a count of graphs,  or the 
number of orbi-cells in  $\mathcal{R}_{g,n}$, the corresponding enumerative problem on
$\cM_{g,n}$ surprisingly becomes the intersection 
numbers of the $\psi$ classes on $\Mbar_{g,n}$.

\subsection{The Combinatorics of Catalan recursion}\label{recursion}

In combinatorics, the Catalan numbers form a sequence of natural numbers that occur in various counting problems for recursively defined objects. They also appear in nature, and have more than twenty alternative definitions. To extend one of these interpretations we define the generalized Catalan number to count a number of graphs on a Riemann surface of genus $g$ with $n$ marked points. We define a \textit{cell graph} to be a ribbon graph with labeled vertices. We introduce the generalized Catalan numbers, $C_{g,n}(\mu_1,\ldots, \mu_n)$, as the \textit{number of cell graphs} of type $(g,n)$ with an outgoing arrow and degree $\mu_i$ at each vertex $i$. In Figure \eqref{RG}, we give an example of a vertex of degree $7$.

\begin{figure}[h]
\centering
\subfloat[]{{\label{RG}\includegraphics[width=.40\textwidth]{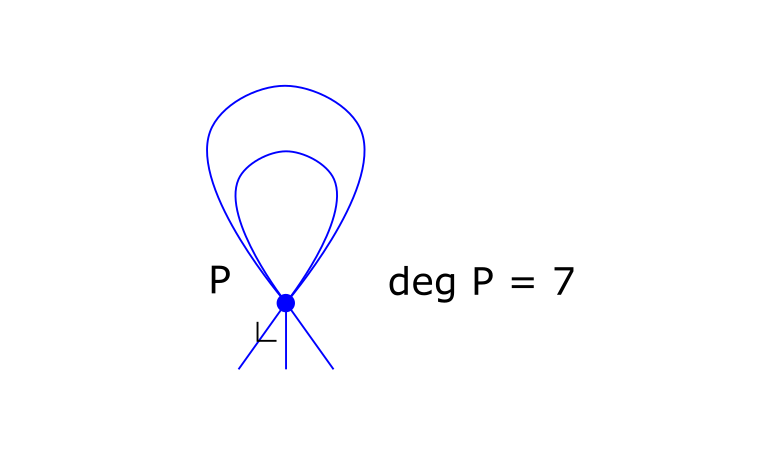}}}%
\subfloat[]{{\label{catalan RG}\includegraphics[width=.40\textwidth]{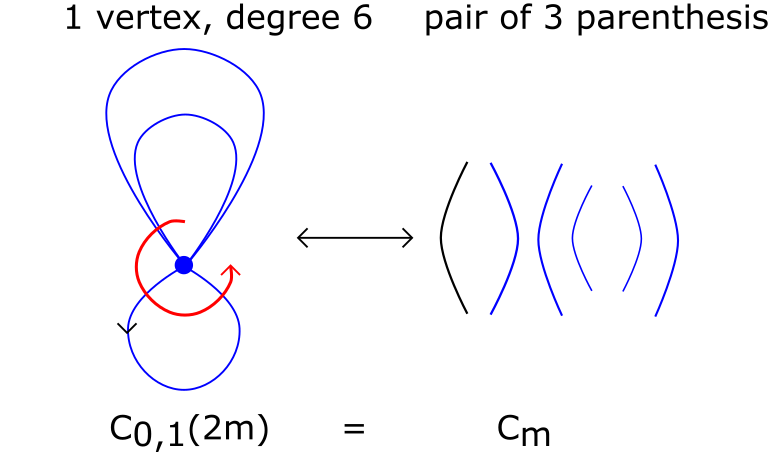}}}%
\caption{}
\end{figure}

\begin{ex}\label{cat}
For $(g,n)=(0,1)$, the Catalan numbers 
$C_m$ count graphs on a Riemann sphere with one vertex (so all edges are loops). We recall the classical definition of Catalan numbers counting the number of expressions containing $m$ pairs of parentheses which are correctly matched.
It is easy to see that $C_{0,1}(2m)=C_m$. In Figure \eqref{catalan RG} we provide an example explaining why each cell graph corresponds to a pair of parenthesis for $m=3$.
\end{ex}

Since for the classical Catalan numbers there is the recursion $C_m=\sum_{a+b=m-1}C_a\cdot C_b$, we expect to find a similar recursion for the generalized Catalan number $C_{g,n}(\mu_1,\ldots, \mu_n)$.
\subsection{Edge contraction operation and Catalan recursion}\label{eca cat}
\begin{thm}[Theorem 3.2, \cite{OD6}, \cite{WL}]\label{catalan}
For $2g-2+n\geq 0$, $n\geq 1$, the generalized Catalan numbers satisfy the following recursion
$$C_{g,n}(\mu_1,\dots,\mu_n)=
\sum_{j=2} ^n \mu_j \cdot C_{g,n-1}
(\mu_1+\mu_j-2,\mu_2,\dots,\widehat{\mu_j},
\dots,
\mu_n)+
$$
$$
+
\sum_{\a+\b = \mu_1-2}
\Bigg[
C_{g-1,n+1}(\a,\b,\mu_2,\dots,\mu_n)
+
\sum_{\substack{g_1+g_2=g\\I\sqcup J=\{2,\dots,n\}}}
C_{g_1,|I|+1}(\a,\mu_I)\; \cdot C_{g_2,|J|+1}(\b,\mu_J)
\Bigg].
$$
\end{thm}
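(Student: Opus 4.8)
The plan is to prove the recursion by a bijective, edge-by-edge analysis of cell graphs, using the distinguished outgoing arrow at the first vertex to single out a canonical edge to remove. Concretely, let $X$ be a cell graph of type $(g,n)$ with degrees $(\mu_1,\dots,\mu_n)$, and let $e$ be the edge carrying the arrowed half-edge at vertex $1$. The whole recursion arises from analyzing what happens when $e$ is contracted (or cut), together with a count of how many graphs $X$ produce each given smaller graph. There are two mutually exclusive possibilities: either $e$ joins vertex $1$ to a distinct vertex $j\in\{2,\dots,n\}$, or $e$ is a loop based at vertex $1$. These two cases produce, respectively, the first sum and the bracketed terms on the right-hand side.

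First I would treat the case where $e$ connects vertex $1$ to a distinct vertex $j$. Contracting $e$ merges vertices $1$ and $j$ into a single vertex whose incident half-edges are the concatenation, in the induced cyclic order, of those at $1$ and at $j$ minus the two half-edges of $e$; the new vertex thus has degree $\mu_1+\mu_j-2$, the arrow is relocated by a fixed rule to the adjacent half-edge, and the topological type drops to $(g,n-1)$. This map is not injective: reconstructing $X$ requires reinserting the edge $e$ and equipping the re-created vertex $j$ with its own arrow, which may sit at any of its $\mu_j$ half-edges. This lost choice is exactly the source of the multiplicity $\mu_j$ and yields the term $\sum_{j=2}^n \mu_j\, C_{g,n-1}(\mu_1+\mu_j-2,\mu_2,\dots,\widehat{\mu_j},\dots,\mu_n)$.

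Next I would treat the loop case, where $e$ is a loop at vertex $1$. Cutting the loop splits the $\mu_1-2$ non-loop half-edges at vertex $1$ into two cyclic arcs; writing $\a$ and $\b$ for the numbers inherited by the two new arrowed vertices gives the constraint $\a+\b=\mu_1-2$. Topologically, cutting the loop either keeps the surface connected while dropping the genus by one, producing a connected cell graph of type $(g-1,n+1)$ whose two new vertices carry degrees $\a$ and $\b$, or it separates the surface into two connected components. In the disconnected case the remaining labels $\{2,\dots,n\}$ split as $I\sqcup J$ and the genus splits as $g_1+g_2=g$, giving a pair of cell graphs of types $(g_1,|I|+1)$ and $(g_2,|J|+1)$ with first-vertex degrees $\a$ and $\b$. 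Summing over $\a+\b=\mu_1-2$ and over all such splittings reproduces the bracketed expression.

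The crux of the argument, and where I expect the real work to lie, is verifying that the assignment sending $X$ to its contracted or cut graph is a genuine bijection onto the disjoint union indexed by the right-hand side, with precisely the stated multiplicities and no over- or under-counting. This requires checking that the arrow-relocation rule is reversible, that the cyclic orderings together with the face and boundary labels are transported correctly under both contraction and loop-cutting, and --- most delicately --- that the connected-versus-disconnected dichotomy for loop-cutting matches the genus-reduction term against the convolution term exactly, with the Euler-characteristic bookkeeping $2g-2+n$ staying consistent across all three families. Once this combinatorial bijection is established, the identity follows by counting both sides.
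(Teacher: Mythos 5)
Your proposal is correct and follows essentially the same route as the paper: contract the arrowed edge at vertex $1$, split into the case where it joins vertex $1$ to a distinct vertex $j$ (yielding the $\mu_j$-weighted first sum, with the arrow relocated to the counterclockwise-adjacent half-edge) and the case where it is a loop (yielding the genus-reduction and convolution terms according to whether cutting disconnects the surface). If anything, your account of the $\mu_j$ multiplicity and of the connected-versus-disconnected dichotomy is more explicit than the paper's own sketch.
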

\begin{proof} Starting from a cell graph with an arrowed edge at vertex $1$ we will contract this edge to a point, and we call this an \textbf{Edge Contraction Operation}.  We distinguish two cases.

Case 1. The arrowed edge connects vertex 1 of degree $\mu_1$ with the vertex $j > 1$ of degree $\mu_j$. We contract the edge and we join the two vertices $1$ and $j$ together as shown in figure below.
The resulting graph has one less vertex, but the genus is the same, the degree of the newly created vertex is $\mu_{1} + \mu_{j} - 2$; we mark the edge that was immediately
counterclockwise of the contracted edge, as indicated in Figure \ref{edge}.

\begin{figure}[h]
\centering
\subfloat[]{{\label{edge}\includegraphics[width=.45\textwidth]{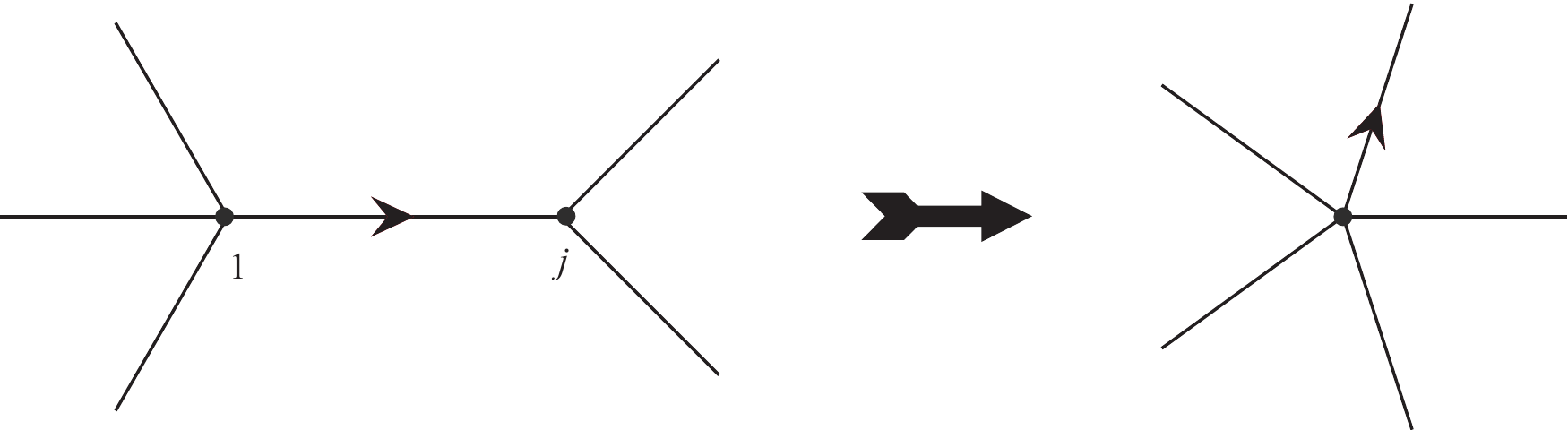}}}%
\qquad
\subfloat[]{{\label{loop}\includegraphics[width=.45\textwidth]{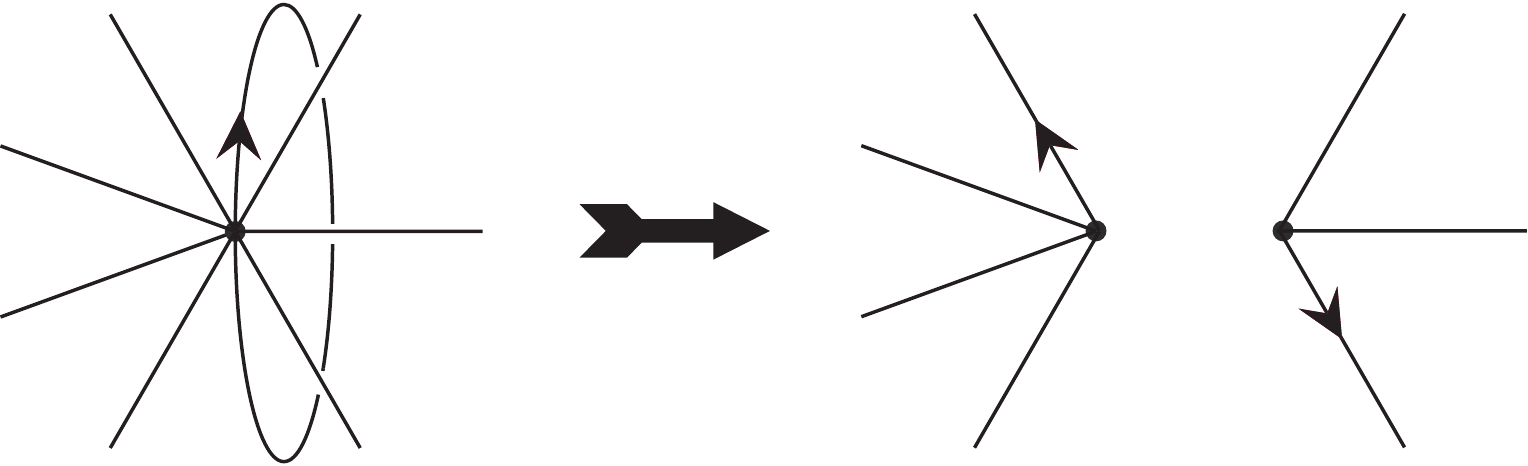}}}%
\caption{}
\end{figure}

Case 2. The arrowed edge is a loop attached to vertex $1$, then we remove this loop from the cell graph, and separate the vertex into two vertices. The loop classifies all other incident edges into two groups with $\alpha$ edges in one group, and $\beta$ the rest. Then $\alpha + \beta = \mu_{1} - 2$, and we create two vertices of degrees $\alpha$ and $\beta$ as in the Figure \ref{loop}.

\end{proof}

\subsection{Spectral curve for the Catalan recursion}\label{spec curve}

\begin{quest}
What is the mirror dual of Catalan numbers?
\end{quest}

We will construct the \emph{spectral curve} of Catalan numbers from the $(g,n)=(0,1)$ unstable geometry. We observe first that the recursion in Theorem \ref{catalan} for the unstable range gives the well-known Catalan recursion of Example \ref{cat}, i.e., 
\be \label{cata}
C_m=\sum_{a+b=m-1}C_a \cdot C_b.
\ee
We define the generating function of Catalan numbers 
$$z(x): = \sum_{m=0}^\infty C_m \cdot x^{-(2m+1)}.$$
The Catalan recursion \eqref{cata} is equivalent to the series $x-z(x)$ being the inverse of the $z(x)$. 
We thus discover the \textit{spectral curve of the Catalan recursion}:
\be\label{spec cata} 
 z^2-z \cdot x+1 = 0.
\ee
\noindent
This is the \textit{mirror dual} of Catalan numbers.

\subsection{Genesis of Enumerative Geometry: Gromov-Witten invariants of a point}\label{GWP}
By the orbifold isomorphism \eqref{orbifold iso} we will deduce that the count of graphs is equivalent to an enumerative question on $\Mbar_{g,n}$. In the stable range we will consider first the generating function of generalized Catalan numbers, or \emph{free energies}:
$$F_{g,n}(x_1,\dots,x_n):=
\sum_{\mu_1,\dots,\mu_n>0}
\frac{C_{g,n}(\mu_1,\dots,\mu_n)}
{\mu_1\cdots\mu_n}
\prod_{i=1}^n x_i^{-\mu_i}.$$
Surprisingly, the generating function $F_{g,n}$ knows $\chi(\cM_{g,n})$ and intersection numbers of $\Mbar_{g,n}$!

We  now perform a change of coordinates. For each of the variables $x_i$ we  introduce a variable $t_i$ by 
\be \label{variable}
x_i:=2 \cdot \frac{t_i^2+1}{t_i^2-1 }, \quad
z_i:=\frac{t_i+1}{t_i-1}.
\ee
With this change of variables $F_{g,n}(t_1,\ldots,t_n)$ becomes a \textit{Laurent polynomial} of degree\\
$3\cdot (2n-2+n)$ with beautiful geometric properties

\begin{enumerate}
\item $F_{g,n}(1,\ldots,1)=(-1)^n\chi(\cM_{g,n})$
\item $F_{g,n}(t_1,\ldots,t_n)=F_{g,n}(\frac{1}{t_1},\ldots,\frac{1}{t_n})$
\end{enumerate}

In Section \ref{spectral curve} we will answer the following question:
\begin{quest}
Why do we have to perform the change of variables \eqref{variable} in order to see the topological information encoded by $F_{g,n}$?
\end{quest}
\noindent
The leading terms of $F_{g,n}(t_1,\ldots,t_n)$ form a homogeneous polynomial of degree \\ $3(2n-2+n)$ 
 $$F_{g,n}(t_1,\ldots, t_n)^{top}=\frac{(-1)^n}{2^{2g-2+n}} \cdot 
\sum_{\substack{d_1+\cdots+d_n\\
=3g-3+n}} \la \tau_{d_1}\cdots \tau_{d_n} \ra \cdot
\prod_{i=1}^n (|2d_i-1|)!! \cdot
\left(\frac{t_i}{2}\right)^{2d_i+1},$$
where the symbol
$${\la \tau_{d_1}\cdots \tau_{d_n} \ra}
:=\int_{\Mbar_{g,n}}c_1(\bL_1)^{d_1}
\cdots c_1(\bL_n)^{d_n}$$
denotes cotangent class intersection numbers on $\Mbar_{g,n}$. Furthermore, the Catalan recursion obtained via Edge Contraction Operations in Theorem \ref{catalan} translates into an infinite system of differential equations known as the \textit{Dijkgraaf-Verlinde-Verlinde equation} \cite{DVV}
of the intersection numbers. This implies the celebrated theorem of Kontsevich, Mirzakhani, Okounkov-Pandharipande and others, on the Witten conjecture.
\begin{thm}[Theorem 6.1, \cite{OD6}]\label{DVV}
The Witten-Kontsevich intersection numbers satisfy the DVV equation. 
\end{thm}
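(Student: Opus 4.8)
The plan is to push the combinatorial recursion of Theorem~\ref{catalan} through the Laplace transform that defines the free energies $F_{g,n}$, and then to read off the recursion satisfied by its top-degree part. First I would multiply the recursion by $\tfrac{1}{\mu_1 \cdots \mu_n}\prod_{i=1}^n x_i^{-\mu_i}$ and sum over all $\mu_i \geq 1$, so that the left-hand side becomes $F_{g,n}(x_1,\dots,x_n)$ by definition. Each of the three groups of terms on the right transforms into a distinct analytic operation, and the key step is to compute these transforms. The linear term $\sum_{j\geq 2}\mu_j\, C_{g,n-1}$ becomes a first-order differential operator coupling the variable $x_1$ to each $x_j$; the non-separating term $C_{g-1,n+1}(\alpha,\beta,\dots)$ and the separating term $\sum C_{g_1}\cdot C_{g_2}$ both originate from the single convolution $\sum_{\alpha+\beta=\mu_1-2}$, which under the Laplace transform turns a discrete convolution into a product of two generating series in a doubled $x_1$-variable.

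The change of variables \eqref{variable} is then essential. The rational uniformization $x = 2\,(t^2+1)/(t^2-1)$, $z=(t+1)/(t-1)$ solves the spectral curve \eqref{spec cata}, with the deck transformation of the covering $x$ given by $t\mapsto -t$, which exchanges the two sheets via $z\mapsto 1/z$; the independent symmetry $t\mapsto 1/t$ accounts for property (2) of $F_{g,n}$. In the $t$-coordinate the product coming from the convolution is naturally expressed as a residue at the ramification point of $x$, and the whole transformed recursion takes the shape of a residue-type (Eynard--Orantin) recursion on the spectral curve. With this in hand I would extract the homogeneous top-degree piece, using the stated dictionary
$$F_{g,n}^{top} = \frac{(-1)^n}{2^{2g-2+n}} \sum_{d_1+\cdots+d_n=3g-3+n} \la \tau_{d_1}\cdots \tau_{d_n}\ra \prod_{i=1}^n (|2d_i-1|)!!\left(\frac{t_i}{2}\right)^{2d_i+1}$$
between $F_{g,n}$ and the intersection numbers. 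Matching the coefficient of a fixed monomial $\prod_{i} t_i^{2d_i+1}$ on both sides of the transformed recursion produces a linear relation among the $\la \tau_{d_1}\cdots \tau_{d_n}\ra$, which I expect to be exactly the DVV equation: the differential term reproduces the weight $(2d_j+1)$ together with the index shift $d_1+d_j-1$, while the residue from the convolution reproduces the factor $\tfrac12$, the shift $a+b=d_1-2$, and the splitting into a genus-$(g-1)$ contribution and a sum over stable bipartitions of the remaining markings.

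The main obstacle will be the bookkeeping in this last matching. The double-factorial weights $(|2d_i-1|)!!$ and the powers of $2$ in the dictionary are finely tuned, and one must check that they propagate correctly through the residue at the ramification point so that no spurious constants survive after truncation to top degree. The most delicate point is the role of the unstable terms $(g,n)=(0,1)$ and $(0,2)$: the $(0,1)$ data is precisely the spectral curve \eqref{spec cata} itself, and one has to confirm that such unstable contributions enter only as the initial data seeding the residue computation, rather than corrupting the stable relation. Establishing that the top-degree truncation is compatible with the residue calculus --- so that the analytic recursion for $F_{g,n}$ descends cleanly onto the purely combinatorial DVV recursion for the intersection numbers --- is the heart of the argument, and is exactly what the passage through the spectral curve coordinate $t$ is designed to make transparent.
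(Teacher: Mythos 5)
Your outline follows exactly the route the paper indicates for this theorem (which it does not prove in detail but defers to Theorem~6.1 of \cite{OD6}): Laplace-transform the edge-contraction recursion of Theorem~\ref{catalan}, pass to the spectral-curve coordinate $t$ of \eqref{variable} where the convolution term becomes a residue at the ramification point, and extract the top-degree homogeneous part of $F_{g,n}$ to match coefficients against the $\la \tau_{d_1}\cdots\tau_{d_n}\ra$ dictionary. Your identification of the delicate points --- the double-factorial and power-of-$2$ bookkeeping and the isolation of the unstable $(0,1)$ and $(0,2)$ contributions --- is also where the actual work in \cite{OD6} lies, so this is essentially the same argument.
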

\noindent
Surprisingly, the recursion relations of Theorem \ref{DVV} can be encoded compactly into a 
single ordinary differential equation. Namely, there exist a differential operator, what we call a \textit{quantum curve}, that annihilates the generating function of the free energies. More precisely, the first quantization result was proved by Mulase and Su\l{}kowski following a conjecture of Gukov and Su\l{}kowski:

\begin{thm} [\cite{MS}]\label{thm MS} Let $\hbar$
be a formal parameter. Then we have
$$\left(\hbar^2 \cdot \frac{d^2}{dx^2}+\hbar \cdot x \cdot \frac{d}{dx}+1\right) 
\exp\left(\sum_{2g-2+n\ge -1}\frac{1}{n!}\cdot\hbar^{2g-2+n}\cdot F_{g,n}(t,\dots,t)
\right)=0.$$
\end{thm}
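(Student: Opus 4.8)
The plan is to run a WKB (semiclassical) analysis on the putative wave function and to reduce the single ordinary differential equation to an infinite hierarchy of relations among the free energies, which one then matches, term by term, with the principal specialization of the Catalan recursion of Theorem \ref{catalan}. Write $\psi = \exp(S)$ with $S = \sum_{m\ge -1}\hbar^m S_m$, where I collect the free energies by Euler characteristic,
$$
S_m := \sum_{2g-2+n = m}\frac{1}{n!}\,F_{g,n}(t,\dots,t),
$$
regarded as functions of $x$ through the substitution \eqref{variable}. Since $\psi'' = \big(S'' + (S')^2\big)\psi$ with $' = d/dx$, the equation $\big(\hbar^2 \frac{d^2}{dx^2} + \hbar x \frac{d}{dx} + 1\big)\psi = 0$ is equivalent to $\hbar^2 S'' + \hbar^2 (S')^2 + \hbar x S' + 1 = 0$. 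Expanding in $\hbar$ and reading off the coefficient of $\hbar^N$ turns this into the hierarchy
$$
S_{N-2}'' + \sum_{a+b=N-2} S_a' S_b' + x\,S_{N-1}' + \delta_{N,0} = 0, \qquad N\ge 0,
$$
so the theorem is equivalent to proving every one of these identities.

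First I would dispose of the low-order (unstable) equations. The coefficient of $\hbar^0$ reads $(S_{-1}')^2 + xS_{-1}' + 1 = 0$, so that the momentum $p := dF_{0,1}/dx$ satisfies $p^2 + xp + 1 = 0$, which under $z=-p$ is exactly the spectral curve \eqref{spec cata}; this is a direct check on the $(0,1)$ generating series, after fixing the standard normalization of the unstable term. The coefficient of $\hbar^1$, namely $S_{-1}'' + 2S_{-1}'S_0' + xS_0' = 0$, then determines $S_0$ and is verified against the $(0,2)$ data. These two computations also pin down the role of the change of variables \eqref{variable}: it is the uniformizing coordinate of \eqref{spec cata}, which is why differentiation in $x$ interacts cleanly with the Laurent-polynomial structure in $t$.

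The substance of the proof is the stable range $N\ge 2$, where I would show that the hierarchy is the image of the Catalan recursion under Laplace transform followed by principal specialization. Concretely, I would multiply the recursion of Theorem \ref{catalan} by $\frac{1}{\mu_1\cdots\mu_n}\prod_i x_i^{-\mu_i}$ and sum over all $\mu_i>0$. The two edge-contraction cases map to the two nonlinear terms of the hierarchy. Case 1 (the arrowed edge joining vertex $1$ to a vertex $j$, carrying weight $\mu_j$ and the degree shift $\mu_1+\mu_j-2$) assembles, after the coordinate change, into the linear transport term $x\,S'$ together with part of the $S''$ term, the factor $\mu_j$ becoming a derivative and the $-2$ shift producing the powers of $x$ dictated by \eqref{variable}. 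Case 2 (the loop removal with splitting $\alpha+\beta=\mu_1-2$) assembles into the quadratic convolution $\sum_{a+b=N-2}S_a'S_b'$ coming from the sum over $g_1+g_2=g$ and $I\sqcup J$, plus the remaining $S''$ term coming from the genus-dropping contribution $C_{g-1,n+1}(\alpha,\beta,\dots)$. Setting $x_i=x$ for all $i$ and summing over $(g,n)$ at fixed $2g-2+n=N$, the symmetry factors $1/n!$ combine with the set partitions $I\sqcup J$ so that the convolution over subsets collapses to the convolution over Euler characteristics, reproducing exactly the order-$\hbar^N$ identity.

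The main obstacle is this last translation: carrying the discrete recursion — with its index shifts by $-2$ and its vertex weights $\mu_j$ — through the Laplace transform and the nonlinear change of variables \eqref{variable}, and checking that the numerical constants and the $x$-dependence land precisely on $\hbar^2\frac{d^2}{dx^2} + \hbar x\frac{d}{dx} + 1$ rather than on some rescaling of it. In practice this is where one must be most careful with the unstable normalizations of $F_{0,1}$ and $F_{0,2}$ (equivalently, the correct antiderivative of $z\,dx$ and the second-kernel term), since these are what reconcile the combinatorial generating functions with the semiclassical data of the spectral curve. Once the principal-specialization lemma is in place, the passage from the $n$-variable differential recursion to the single ODE is purely formal.
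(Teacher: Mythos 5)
Your proposal is correct and follows essentially the same route as the argument this paper defers to \cite{MS} (built on the Laplace-transform analysis of \cite{OD6}): the WKB substitution $\psi=e^{S}$ reduces the ODE to the $\hbar$-hierarchy you display, whose stable part is the principal specialization of the Laplace transform of the edge-contraction recursion of Theorem \ref{catalan}, with the unstable $(0,1)$ and $(0,2)$ terms fixed by the normalization you indicate. The paper itself states the theorem without proof, but your outline, including the identification of where the diagonal second-derivative terms and the $1/n!$ versus $I\sqcup J$ bookkeeping require care, matches the cited proof; there is no substantive gap.
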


Here, the scalar $\hbar$ is a deformation parameter. In Section \ref{section 5}, we will see that mathematically $\hbar$ is an extension class of line bundles on an algebraic curve.

\begin{quest} 
\begin{enumerate}
\item Why does this complicated function satisfy such a simple differential equation? 
\item Where does this differential equation come from?
\end{enumerate}
\end{quest}

Let us replace $z$ by $-\hbar \frac{d}{dx}$ in the \textit{the spectral curve equation of the Catalan recursion} \eqref{spec cata}
$$z^2-x\cdot z+1=0.$$
 We obtain precisely the differential operator known as the \textit{the quantum curve of the Catalan spectral curve} in Theorem \ref{thm MS}:
$$\hbar^2 \cdot \frac{d^2}{dx^2}+\hbar\cdot x\cdot \frac{d}{dx}+1.$$

The following questions are natural.

\begin{quest}
\begin{enumerate}
\item From the shape of the above equations, it looks like a canonical quantization of the spectral curve. Is it really the case?
\item If so, then what is the mathematical framework that explains this surprising phenomenon? 
\end{enumerate}
\end{quest}

\subsection{Cut-and-Join equation for orbifold Hurwitz numbers} Another example of the use of edge contraction operations on cell graphs gives a surprising enumerative problem of Hurwitz numbers. Let $H_{g,n}^r (\mu_1,\dots,\mu_n)$ denote the number of 
topological types 
of regular maps from a smooth curve of genus $g$ 
to $\bP^1$
with profile $(\overset{m}{\overbrace{r,\dots,r})}$ over $0\in\bP^1$, labeled profile $(\mu_1,\dots,\mu_n)$ over $\infty\in\bP^1$, and simple ramification at any other ramification points, weighted with automorphisms of such maps. These numbers are referred to as \emph{orbifold Hurwitz numbers}. For $r=1$, they count  simple Hurwitz numbers. 
\noindent
In \cite{OD16} we generalized the notion of \textit{branching graph} of Okounkov and Pandharipande 
\cite{OP} ($r=1$) to \textit{r-Hurwitz graphs} (arbitrary $r$), via combinatorics. An \textit{r-Hurwitz graphs} is a cell graph with a collection of dots associated to each vertex ([Definition 6.1, \cite{OD16}]). Counting the number of \textit{r-Hurwitz graphs} via \textit{edge contraction operations} we recover the Cut-and-Join equation for orbifold Hurwitz numbers. Denoting $\cH_{g,n}^r(\mu_1\dots,\mu_n)=\mu_1\ldots \mu_nH_{g,n}^r(\mu_1\dots,\mu_n)$, we obtain

\begin{thm}[Theorem 6.6, \cite{OD16}, Cut-and-Join equation for orbifold Hurwitz numbers]
\label{thm:ECF}
The number of arrowed Hurwitz graphs
satisfy the following edge-contraction formula.
\be
\label{ECF}
\begin{aligned}
 &\left(2g-2+\frac{d}{r}+ n\right)
 \cH_{g,n}^r(\mu_1\dots,\mu_n) 
\\
&\qquad
=
\sum_{i< j}\mu_i\mu_j
\cH_{g,n-1}^r (\mu_1,\dots,\mu_{i-1},
\mu_i+\mu_j,\mu_{i+1},\dots,
\widehat{\mu_j},\dots,\mu_n)
\\
\qquad
&+\half \sum_{i=1}^n
\mu_i \sum_{\substack{\a+\b=\mu_i\\
\a, \b\ge 1}}
\left[\cH_{g-1,n+1}^r (\a,\b,\mu_1,\dots,
\widehat{\mu_i},\dots,\mu_n)
\phantom{\sum_{\substack{g_1+g_2=g\\
I\sqcup J = \{1,\dots,\hat{i},\dots,n\}}}}
\right.
\\
&\qquad
\left.
+\sum_{\substack{g_1+g_2=g\\
I\sqcup J = \{1,\dots,\hat{i},\dots,n\}
}}
\cH_{g_1,|I|+1}^r (\a,\mu_I)
\cH_{g_2,|J|+1}^r (\b,\mu_J)
\right].
\end{aligned}
\ee
Here, $\widehat{\;\;}$ indicates the omission
of the index, and $\mu_I = (\mu_i)_{i\in I}$
for any subset $I\subset\{1,2,\dots,n\}$.
\end{thm}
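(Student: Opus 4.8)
The plan is to imitate, for $r$-Hurwitz graphs, the edge-contraction argument that proves Theorem~\ref{catalan}. Recall that an arrowed $r$-Hurwitz graph of type $(g,n)$ is a cell graph decorated with dots recording the profile $(r^m)$ over $0$, together with a marked outgoing half-edge (an arrow) at each vertex; the quantity $\cH_{g,n}^r(\mu_1,\dots,\mu_n)=\mu_1\cdots\mu_n\,H_{g,n}^r(\mu_1,\dots,\mu_n)$ counts these, the factor $\mu_i$ encoding the $\mu_i$ possible arrows at vertex $i$. The first step is to read off the left-hand coefficient geometrically: for a degree $d=\sum_i\mu_i=mr$ cover of $\bP^1$ with profile $(r^m)$ over $0$ and $(\mu_1,\dots,\mu_n)$ over $\infty$, the Riemann--Hurwitz formula gives the number of simple branch points as $b=2g-2+\frac{d}{r}+n$. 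Thus the left-hand side $(2g-2+\frac{d}{r}+n)\,\cH_{g,n}^r$ counts pairs consisting of an arrowed $r$-Hurwitz graph together with a choice of one of its $b$ edges corresponding to the simple branch points, and the whole identity will be obtained by performing the edge-contraction operation on this distinguished edge and sorting the outcomes.

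Next I would run the same case distinction as in the proof of Theorem~\ref{catalan}. In \textbf{Case 1 (join)}, the distinguished edge joins two distinct vertices $i$ and $j$; contracting it merges them into a single vertex of degree $\mu_i+\mu_j$, leaving the genus unchanged and producing a graph of type $(g,n-1)$ with profile $(\dots,\mu_i+\mu_j,\dots)$. Tracking how many arrowed, dotted configurations collapse to a given contracted graph, namely the $\mu_i\mu_j$ reattachment choices on the two sides of the erased edge, yields the term $\sum_{i<j}\mu_i\mu_j\,\cH_{g,n-1}^r(\dots,\mu_i+\mu_j,\dots)$. In \textbf{Case 2 (cut)}, the distinguished edge is a loop at a single vertex $i$; removing it splits that vertex into two vertices of degrees $\a$ and $\b$ with $\a+\b=\mu_i$. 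Exactly as in Case~2 of Theorem~\ref{catalan}, this subdivides further according to whether the loop is non-separating, giving a connected graph of type $(g-1,n+1)$ counted by $\cH_{g-1,n+1}^r(\a,\b,\dots)$, or separating, giving a disjoint union indexed by $g_1+g_2=g$ and $I\sqcup J=\{1,\dots,\hat i,\dots,n\}$ and counted by the product $\cH_{g_1,|I|+1}^r(\a,\mu_I)\,\cH_{g_2,|J|+1}^r(\b,\mu_J)$. The prefactor $\half\sum_i\mu_i$ then records the $\mu_i$ arrow choices at the split vertex, with the factor $\half$ compensating for the fact that the two halves $\{\a,\b\}$ are produced as an unordered pair.

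To upgrade this case analysis into an equality I would exhibit the edge-contraction operation as a genuine bijection, by constructing its inverse: given a configuration appearing on the right-hand side, the \emph{expansion} operation reinserts the contracted edge (re-splitting a degree-$(\mu_i+\mu_j)$ vertex, or reconnecting two vertices by a loop), and is uniquely determined once the arrow data and the cyclic orderings at the affected vertices are prescribed. Checking that contraction and expansion are mutually inverse, and that each respects the cyclic order of half-edges, the dot decoration, and the labeling, turns the term-by-term matching above into the stated formula~\eqref{ECF}.

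The step I expect to be the main obstacle is the honest bookkeeping of the \emph{dots}, i.e.\ of the $r$-orbifold structure over $0\in\bP^1$, under contraction: one must verify that the dot decoration transfers consistently when two vertices are merged or a single vertex is split, and in particular that no simple-branch edge is ever confused with a dotted (profile-over-$0$) edge, so that the count of contractible edges is exactly $b=2g-2+\frac{d}{r}+n$ and no spurious contributions arise. Making this decoration transfer compatible with the symmetry factor $\half$ and with the reattachment weights $\mu_i\mu_j$ and $\mu_i$ is where the $r$-dependence genuinely enters, and is the technical heart of the argument; the remainder is a direct transcription of the proof of Theorem~\ref{catalan}.
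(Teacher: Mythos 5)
Your proposal follows exactly the route the paper indicates: the theorem is established by applying the edge-contraction operation to arrowed $r$-Hurwitz graphs, in direct parallel with the proof of Theorem~\ref{catalan}, with the left-hand coefficient $2g-2+\frac{d}{r}+n$ identified via Riemann--Hurwitz as the number of edges (simple branch points) and the join/cut dichotomy producing the two families of terms; the paper itself only sketches this and defers the details (in particular the dot bookkeeping you correctly flag as the technical heart) to \cite{OD16}. Your reconstruction is consistent with that argument, including the correct accounting of the reattachment weights $\mu_i\mu_j$ and $\mu_i$ and the symmetry factor $\half$.
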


The restriction to the $(0,1)$ unstable cases of Theorem \ref{thm:ECF} recovers the \textit{spectral curve of the orbifold Hurwitz numbers}, that is known as the r-Lambert curve. The computations are similar to the ones explained in Section \ref{recursion}, leading to the \textit{mirror curve of Hurwitz numbers}
$$x^r=ye^{-ry}.$$

Edge contraction operations are graphical manifestation of a Frobenius algebra structure and it was shown in [Corollary 4.8, \cite{OD16})] that they give alternative axiomatic definition of 2 dimensional \textit{topological quantum field theory} (2d TQFT). We further emphasize the importance of these operations by relating the 2d TQFT formula of [Corollary 4.8, \cite{OD16})] with the count of points of a character variety for a finite group and Hodge-Deligne polynomial of a character variety in \cite{OD21, OD22}.

While Catalan numbers have an algebraic spectral curve, Hurwitz numbers have an analytic spectral curve. Later on we will focus on rank $2$ Higgs bundles, whose spectral curves are algebraic. In Section \ref{spectral curve}, we will encounter
  with the familiar Catalan example.


\section{A walk into the woods of Higgs bundles and connections}\label{section 3}

Let $C$ be a smooth projective curve, 
and $K_C$ the canonical bundle of $C$ whose sections are holomorphic $1$-forms.

\subsection{Moduli spaces of vector bundles}\label{vb} We recall  the somewhat anachronistic 
definition of a holomorphic  vector bundle $E$ over $C$. For an open cover of affine sets $C=\cup_{\a} U_{\a}$, we denote by $f_{\a\b}:U_{\a}\cap U_{\b}\stackrel{}{\rightarrow} GL_{r}(\mathbb{C})$ the holomorphic transition functions that satisfy  the 1-cocyle condition $f_{\a\b}=f_{\a\gamma}\cdot f_{\gamma\b}$ on $U_{\a}\cap U_{\b}\cap U_{\gamma}$. Two points
 $(x_{\a}, \zeta_{\a}(x_{\a}))\in U_{\a}\times \mathbb{C}^r$ and
 $(x_{\b}, \zeta_{\b}(x_{\b}))\in U_{\b}\times \mathbb{C}^r$ are glued
 if $\zeta_{\a}(x)=f_{\a\b}(x)\cdot \zeta_{\b}(x)$ for $x\in U_{\a}\cap U_{\b}$. Two transition functions $f_{\a\b}$ and $f'_{\a\b}$ subordinating  the same open covering $U_{\a}$ define the isomorphic
 vector bundle if and only if there exists a family $u_{\a}:U_{\a}\stackrel{}{\rightarrow} GL_{r}(\mathbb{C})$ of holomorphic maps, called
 \emph{gauge transformations}, satisfying
$$f_{\a\b}'=u_{\a}\cdot f_{\a\b} \cdot u_{\b}.$$ A global holomorphic section of the vector bundle $E$, $s\in H^0(C,E)$, is given by a collection of holomorphic maps $s_{\alpha}:U_{\a}\stackrel{}{\rightarrow} \mathbb{C}^r$ compatible with the transition functions: $s_{\a}(x)=f_{\a\b}(x) \cdot s_{\b}(x)$ for $x\in U_{\a}\cap U_{\b}$. 

The $degree$ of a vector bundle is the first Chern class, $\deg(E):=c_1(\Lambda^r(E))$. Over a compact connected Riemann surface $C$, topologically vector bundles are completely classified by the discrete invariants, rank and degree. However, introducing a holomorphic structure, classification results of vector bundles on a smooth, irreducible, complex projective curve become more elaborate. Define the slope of $E$ to be the rational number $\mu(E)=\deg(E)/\rank(E)$ - this is a topological quantity with important implications on holomorphic structures. A holomorphic vector bundle $E$ is called \textit{stable} (resp. \emph{semi-stable}) if for any non-trivial holomorphic subbundle $F$,  $\mu(F)<\mu(E)$ (resp. $\mu(F)\leq \mu(E)$) holds.
There are complete classification results for holomorphic vector bundles for rational and elliptic curves, due to Grothendieck for the case of the Riemann sphere \cite{Gr}, and Atiyah for the case of elliptic curves \cite{At}. However, for genus higher than one there are no such classification results available, therefore such question is replaced by the construction of \textit{moduli space of 
stable holomorphic vector bundles} of rank $r$ and degree $e$, denoted by $\mathcal{U}_C(r,e)$, whose geometry has been  intensely studied. Over a smooth projective curve $C$ of genus $g>1$, the moduli space 
$\mathcal{U}_C(r,e)$ is a quasi-projective complex variety of dimension $r^2(g-1)+1$ (Narasimhan-Seshadri \cite{NS},   Seshadri \cite{Se}, see also 
Atiyah-Bott \cite{AB} and
Mumford-Fogarty-Kirwan \cite{MFK} for more information on the moduli theory of 
stable vector bundles over Riemann surfaces). 

By reducing the Yang-Mills self-duality equations from dimension 4 to dimension 2 on a compact Riemann surface, Hitchin introduced in \cite{H1} \textit{the moduli space of solutions to Hitchin's equations}, having a $\mathbb{CP}^1$ of complex structures, parametrized by $\zeta\in \mathbb{CP}^1$. If $\zeta$ is zero, then this space is identified with the \textit{Dolbeault moduli space $\mathcal{M}_{\Dol}$ of holomorphic stable Higgs bundles} 
consisting of $(E, \phi)$, where $E$ is a vector bundle of rank $r$ and fixed degree $e$, 
and $\phi\in H^0(C,\End  (E)\tensor K_C)$ is 
a \emph{Higgs field}.
 If $r$ and $e$ are coprime, then $\mathcal{M}_{\Dol}$ becomes quasi-projective variety. If $\zeta$ is non-zero and $e=0$, then the the moduli space of solutions to Hitchin's equations can be identified,
 as a complex analytic variety,
  with \textit{the de Rham moduli space 
  $\mathcal{M}_{\deR}$}
  consisting of irreducible flat connections $\nabla$
  in holomorphic vector bundles $V$ of rank $r$ and 
  degree $0$. 

The cotangent bundle $T^{*}\cU_{C}(r,e)$ is an open dense subset of $\mathcal{M}_{\Dol}$ whose complement has codimension $2$ or higher. Therefore, $\dim \mathcal{M}_{\Dol}= \dim\; T^{*}\cU_{C}(r,e)=2 \dim\; \cU_{C}(r,e)= 2\cdot r^2 \cdot (g-1)+2$,
and it acquires a holomorphic symplectic structure.

These two moduli spaces, $\mathcal{M}_{\Dol}$ and $\mathcal{M}_{\deR}$, will play a key role in our discussion.

\begin{Def}\label{Higgs def}
Let $C$ be a smooth projective curve of genus at least two, $E$ and $V$  two holomorphic rank r vector bundles on $C$ and $d$ the exterior differential on $C$.
\begin{enumerate} 
\item A \textit{holomorphic Higgs bundle} is a pair $(E, \phi)$, where $E$ is a holomorphic vector bundle and $\phi:E\stackrel{}{\rightarrow}E\otimes K_C$ is a $\mathcal{O_C}$-module homomorphism, i.e., $\phi(f \cdot s)=f \cdot \phi(s)$, $\forall f\in \mathcal{O_C}, s\in E$.
\item A \textit{stable}  (resp. \emph{semi-stable})
Higgs bundle is a Higgs pair $(E, \phi)$ such that for any $\phi$-invariant sub-bundle $F$ of $E$, $\phi:F\stackrel{}{\rightarrow}F\otimes K_C$,  $\mu(F)<\mu(E)$ (resp. $\mu(F)\leq \mu(E)$) holds.
\item A  \textit{holomorphic connection} is a 
$\mathbb{C}$-linear homomorphism  $\nabla: V\stackrel{}{\rightarrow}V\otimes K_C$ 
of a holomorphic vector bundle $V$  such that 
$\nabla(f \cdot s)=df\tensor  s+f\cdot \nabla(s)$, $\forall f\in \mathcal{O_C}$ and holomorphic 
sections $s\in V$. A \emph{differentiable connection}
is defined in the same way, replacing $f$
by a differentiable function on $C$ and $s\in V$ 
by a differentiable section of $V$.
\item An \textit{irreducible connection} is a connection $\nabla$ in $V$ for which no sub-bundle of $V$  is $\nabla$-invariant.
\item A \textit{hermitian metric} on the complex vector bundle $E$
is a positive definite Hermitian form $h$
on each fiber $E_p$, $p\in C$. It is a smooth section of $\Gamma(E\otimes \bar{E}^*)$ such that for all $\eta, \zeta\in E_{p}$,
$$\la \eta,\zeta\ra:= h_{p}(\eta, \bar{\zeta})=h_{p}(\zeta, \bar{\eta}) \text{ and } h_p(\zeta, \bar{\zeta})>0.$$
\item Let $h$ be a hermitian 
metric in a vector bundle $V$, and 
$\la\cdot,\cdot\ra$ the hermitian inner 
product. An $h$-\textit{unitary connection} on a 
 vector bundle $V$ is a 
 differentiable 
connection $\nabla$ such that for any 
differentiable sections $s$ and $t$ of $E$,   $\la s,t\ra=\la \nabla(s),\nabla(t)\ra$.
\end{enumerate}
\end{Def}

\begin{rem}
If $\nabla_1$ and $\nabla_2$ are 
holomorphic connections in a 
holomorphic vector bundle $V$, then the difference
$\nabla_1-\nabla_2$ is an $\mathcal{O}_C$-module
homomorphism.  Therefore, $(V, \nabla_1-\nabla_2)$ is a holomorphic
Higgs pair. This proves that the Dolbeault and the de Rham moduli spaces, $\mathcal{M}_{\Dol}$ and $\mathcal{M}_{\deR}$, have the same dimensions.
\end{rem} 

\subsection {Spectral curve of the Hitchin fibration} \label{}
The Higgs field $\phi:E\stackrel{}{\rightarrow}E\otimes K_C$ induces a map $\wedge^i \phi:\wedge^i E\stackrel{}{\rightarrow}\wedge^i E\otimes K_C^i$
for every $i\ge 0$, locally given by the alternative sum of the $i$-th minors of $\phi$, with trace $\tr(\wedge^i \phi)\in H^0(C, K_C^i)$. If $i=r$ then $\wedge^r \phi =\tr (\wedge^r \phi)=\det \phi \in H^0(C, \End (\wedge^r E)\otimes K_C^r)$. This defines a holomorphic map, called the \textit{Hitchin map} $H$, which 
induces an algebraically
completely integrable Hamiltonian system
\be
\label{Hitchin map}
\begin{CD}
\mathcal{M}_{\Dol}
\\
@VV{H}V
\\
\;\;\;B:=\bigoplus_{i=1}^{r}H^0(C, K_{C}^i).
\end{CD}
\ee
For a Higgs pair $(E,\phi)$ let $s$ denote the \textit{spectral data} 
$$s:=\big((-1)^{i}\tr(\wedge^{i}\phi)\big)_{i=1,\dots,r}\in B\cong \mathbb{C}^{r^2(g-1)+1}.$$
The Hitchin map $H$ sends a Higgs bundle  to its spectral data
$$(E,\phi)\stackrel{H}{\rightarrow}s.$$

Obviously, the fiber  of $H$ over zero
contains all Higgs bundles 
of the form $(E, \phi=0)$, where $E$ is a stable vector bundle, so $\mathcal{U}_C(r,e)\subset H^{-1}(0)$.

The total space of the canonical bundle $K_C$ is the cotangent bundle $T^*C$. There is a tautological $1$-form $\eta \in H^0(T^*C,\pi^*K_C)$
on the symplectic variety $T^*C$ defined by
$$
\begin{CD}
T^*C@<<<\pi^*K_C\\
@V{\pi}VV
\\
C@<<< K_C.
\end{CD}
$$
Locally $\eta$ is defined by $ydx$ for $x\in C$ and $y$ the fiber coordinate, while
$-d\eta$ is the natural holomorphic symplectic form on $T^*C$. 

The characteristic polynomial of a Higgs bundle
$(E,\phi)$  defines the \textit{spectral curve}  denoted by $\Sigma_s$ in $T^*C$ as the divisor of the zero of the following
global section
\be \label{spectral}
\det(\eta\cdot I_r-\pi^*\phi):=\sum_{i=0}^r (-1)^i \tr(\wedge^i \phi)\otimes \eta^{\otimes(r-i)}\in H^0(T^*C, \pi^*K_C^r).
\ee
The coefficients of the 
defining equation
of $\Sigma_s$ are given by the spectral data $s$ of the Higgs bundle $(E,\phi)$. Observe that $\eta$ induces a $1$-form on the spectral curve $\Sigma_s$ by pulling back via the inclusion map $i$ of $\Sigma_s$ in $T^*C$.

\begin{rem}
General Properties of $\Sigma_s$.
\begin{enumerate}
\item $\Sigma_s$ is non-singular for generic $s$.
\item
 $\Sigma_s$ is a curve inside the cotangent bundle $T^\ast C$ of genus 
$$g(\Sigma)=r^2(g-1)+1.$$
\item
The fiber over a generic point is the Jacobian:
$$H^{-1}(s)=\Jac(\Sigma_s).$$
\item
There is a degree $r$ cover
$$
\begin{CD}
\;\Sigma_s
\\
@VV{r:1}V
\\
\;C.
\end{CD}
$$

\end{enumerate}
\end{rem}

From now on we will consider the  Hitchin theory for the Lie group $G=SL_{r}(\mathbb{C})$. In other words,  $\mathcal{M}_{\Dol}$ denotes the moduli space of holomorphic Higgs bundles $(E, \phi)$ with $\tr (\phi)=0$ such that $E$ has the fixed trivial determinant. The fiber of the Hitchin map $H^{-1}(s)$ 
at a generic point 
$$s\in B:=\bigoplus_{i=2}^r H^0(C,K_C^i)
$$
becomes the Prym variety 
$$
H^{-1}(s) = \Prym(\Sigma_s\rar C):= \Ker(Nm)
$$
of the spectral covering
 $\pi: \Sigma_s\lrar C$ with the spectral data $s$, i.e., the kernel of the norm map
 $$
 Nm: \Jac(\Sigma_s)\owns \sum_{p\in \Sigma_s}
 m_p\cdot p \longmapsto \sum_{p\in \Sigma_s}
 m_p \cdot \pi(p)\in \Jac(C).
 $$ 
 The moduli space $\mathcal{M}_{\Dol}$ is a generically Abelian fibration over $B$.

\subsubsection{Rank $2$ simplification}\label{rk 2}

We will focus next on rank $2$ and degree $0$.

\begin{enumerate}

\item The $SL_{2}(\mathbb{C})$-Higgs bundle $(E, \phi)$, has $\tr(\phi)=0$ and trivial determinant\\
 $\det(E)=\wedge^2 E=\mathcal{O}_C$.
\item The Hitchin map 

\vskip-0.2in
$$
\begin{CD}
\mathcal{M}_{\Dol}
\\
@VV{H}V
\\
B:=H^0(C, K_{C}^2)\ni s
\end{CD}
$$
sends $(E,\phi)\stackrel{H}{\rightarrow}\det(\phi)=s.$

\end{enumerate}

\begin{ex}[rank two $SL_{2}(\mathbb{C})$ stable  Higgs bundles] Choose a spin structure on 
a curve $C$ of genus $g\ge 2$, i.e., a choice of the line bundle $K_C^{\frac{1}{2}}$, \eqref{spin}. For any quadratic differential $q\in H^0(C, K_{C}^2)$  on $C$, $\left(K_C^{\frac{1}{2}}\oplus K_C^{-\frac{1}{2}}, 
\begin{pmatrix}
0&q\\ 1& 0
\end{pmatrix}\right)$
 is a  stable Higgs bundle on $C$. This example will play a key role in our later analysis (see Definition \eqref{section}). 
 
 Another example is on the  trivial
   vector bundle. For 
   any non-zero holomorphic $1$-form $p\in H^0(C,K_C)$, 
    $\left(\mathcal{O}_C\oplus \mathcal{O}_C, \begin{pmatrix}
0&p\\ p& 0
\end{pmatrix}\right)$ is again a stable Higgs bundle.
\end{ex}

\begin{rem}\label{conditions}
Let $\{f_{\a\b}\}$ be a transition function for $E$. 
\begin{enumerate}
\item Locally on $C$, $\nabla|_{U}=d+ A|_{U}$, where $A:E\stackrel{}{\rightarrow}E\otimes K_C.$

\item Any Higgs field is compatible with the transition functions of $E$. Indeed, if $\phi_{\a}=\phi|_{U_{\a}},$
then 
\be \label{bundle} 
\phi_{\a}=f_{\a\b}\cdot \phi_{\b} \cdot f_{\a\b}^{-1}.
\ee
\item If $s\in E$ is a holomorphic section, then $s_{\a}(x)=f_{\a\b}(x) \cdot s_{\b}(x)$ for $x\in U_{\a}\cap U_{\b}$. We note that derivatives of sections
do not make sense as sections: 
$ds_{\a}(x)\neq f_{\a\b}(x) \cdot ds_{\b}(x)$. Nevertheless, if
$A_{\alpha}=A|_{U_{\alpha}}$, then $(d+A_{\alpha})\cdot s_{\alpha}=f_{\a\b} \cdot [(d+A_{\b})\cdot s_{\b}]$ on $U_{\a}\cap U_{\b}$. Therefore, for any connection $\nabla=\{d+A_{\a}\}_{\a}$ on $E$, the Gauge transformation holds
\be \label{gauge}
A_{\a}=f_{\a\b} \cdot A_{\b}\cdot f_{\a\b}^{-1}-f_{\a\b}^{-1}\cdot df_{\a\b}.
\ee
Since $f_{\a\b}\cdot f_{\a\b}^{-1}=1$, by applying the exterior differential $d$, it is obvious that \eqref{gauge} is equivalent to $A_{\a}=f_{\a\b} \cdot A_{\b}\cdot f_{\a\b}^{-1}+f_{\a\b}\cdot df_{\a\b}^{-1}$, or simply $df_{\a\b}=f_{\a\b}\cdot A_{\b}-A_{\a} \cdot f_{\a\b}$.

\end{enumerate}
\end{rem} 

Locally, $\phi$ and $A$ are both $r$ by $r$ matrices of 1-forms, but satisfying different rules with respect to the transition function of the vector bundle. We emphasize here that given a Higgs bundle $(E, \phi)$ in $\mathcal{M}_{\Dol}$, it is not obvious how to obtain a connection $(V, \nabla)$ in $\mathcal{M}_{\deR}$. The goal of these lecture notes is to reveal the holomorphic \textit{path} that a Higgs bundle on a Hitchin section 
travels to become a connection, called an \textit{oper}, is the goal of these lecture notes.


\section{From Higgs bundles to quantum curves}\label{section 4}

\subsection{Higgs bundles for Catalan numbers}\label{spectral curve}

We wish to consider the  curve $z^2-z\cdot x+1 = 0$ of \eqref{cata} as a local expression of a singular 
spectral curve (divisor) inside the Hirzebruch surface $\mathbb{F}_2$ associated with 
a meromorphic Higgs bundle.

\begin{ex}{The \textbf{Spectral curve} of Catalan
numbers  as the \textbf{spectral curve} of a Higgs bundle.}
\begin{enumerate}
\item The curve is $C=\mathbb{P}^1$, and the vector bundle $E=K_C^{\frac{1}{2}}\dsum K_C^{-\frac{1}{2}}=\mathcal{O}_{\bP^1}(-1)\oplus \mathcal{O}_{\bP^1}(1).$
\item The \textbf{meromorphic Higgs field} $\phi:E\stackrel{}{\rightarrow}E\otimes K_C(\ast)$ is given by
$$\phi=\begin{pmatrix}
0&-(dx)^2\\1& x\cdot dx
\end{pmatrix}$$
on the affine line $\bA^1\subset \bP^1$.
\item The \textit{spectral curve of a Higgs bundle}, denoted by 
$$\Sigma\subset \overline{T^*\mathbb{P}^1}=\mathbb{F}_2: = \bP\big(\mathcal{O}_{\bP^1}(-2)\oplus \mathcal{O}_{\bP^1}\big),
$$ is given by the characteristic polynomial of the Higgs field $\phi$ 
$$\det(\eta \cdot I_2-\phi)=\det(z\cdot dx\cdot I_2-\phi)=(z^2-x\cdot z+1)\cdot(dx)^2=0$$ in $T^*\bA^1$.
\end{enumerate}

\end{ex}

\begin{itemize}
\item We further consider a \textbf{resolution of singularity} of curve $\Sigma$ by blowing up $\bF_2$.
\item $\Sigma$ is smooth  in $T^*\bA^1$ near 
$(0,0)$ (Figure \ref{zero}),
and  has a \textit{double point} in $\mathbb{F}_2$ at $(\infty, \infty)$ (Figure \ref{infinity}). 

\begin{figure}[h]
\centering
\subfloat[$\Sigma$ around $(0,0)$: $z^2-x\cdot z+1=0$]{{\label{zero}\includegraphics[width=.45\textwidth]{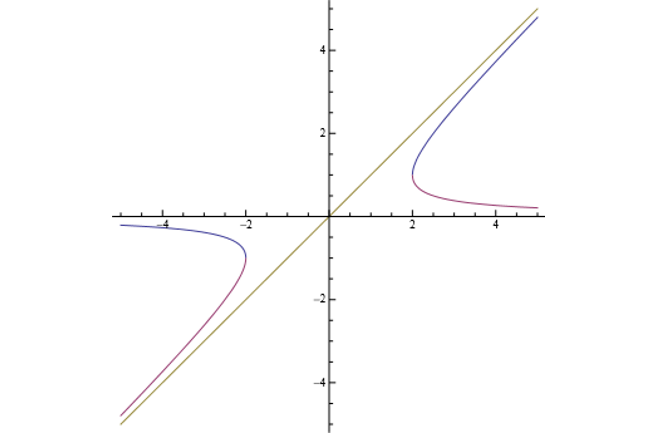}}}%
\qquad \qquad
\subfloat[$\Sigma$ at $(\infty,\infty)$: $u^4-u\cdot w+w^2=0$]{{\label{infinity}\includegraphics[width=.45\textwidth]{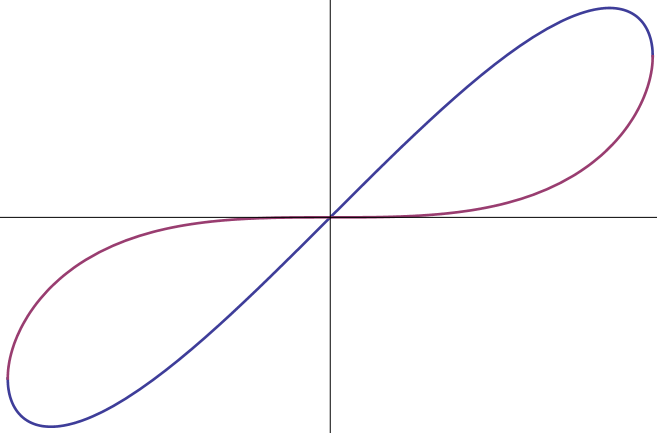}}}%
\caption{}
\end{figure}

\item Blow up the surface $\mathbb{F}_2$ and denote by $\widetilde{\Sigma}$ the strict transform of $\Sigma$. Introduce a new local parameter $w_1$ such that $w=w_1\cdot u$.  The strict transform $\widetilde{\Sigma}$ becomes a conic
$$\widetilde{\Sigma}: u^2+(w_1-\frac{1}{2})^{2}=\frac{1}{4}.$$
\item $\widetilde{\Sigma}$ is a  rational curve, and $t$ of \eqref{variable} \textbf{is the normalization coordinate} that parametrizes  the affine part of the spectral curve by 
\begin{align*}
u&=\frac{t^2-1}{2\cdot(t^2+1)} 
\\ 
w_1&=\frac{1}{2}-\frac{t}{t^2+1}.
\end{align*}
\item Denote by $F$  the class of a fiber
of $\bF_2\rar \bP^1$, by $B$ the negative section, 
i.e., the zero section of 
$T^*\bP^1$, and by $E$  the exceptional divisor created on $\mathbb{F}_2$ after the blow-up at the double point of $\Sigma$ at $(\infty,\infty)$. Then the proper transform $\widetilde{\Sigma}$ on the blown-up of $\mathbb{F}_2$ is written as the divisor 
$$\widetilde{\Sigma} =  4F+2B-2E.$$
\end{itemize}

\subsection{Higgs bundles and quantum curves}
We are now ready to generalize Theorem \ref{thm MS} and results in Section \ref{spectral curve} to any meromorphic Higgs bundle $(E, \phi)$ of rank two. 

In \cite{OD8} we have established a \textbf{new connection} between the Hitchin theory/Higgs bundles and topological recursion/quantum curve theory. These are two apparently different broad theories that share the notion of \textbf{spectral curves}. To establish the notations, let $C$ be a smooth projective curve of \textit{arbitrary genus}, and $K_C$ the canonical bundle. We denote by $E$  a holomorphic \textit{rank two} vector bundle on $C$, and by $\phi:E\stackrel{}{\rightarrow}E\otimes K_C(\ast)$ a Higgs field. 
\begin{itemize}
\item In \cite{OD8} we considered  a \textit{holomorphic} Higgs pair $(E, \phi)$. Hitchin constructed the spectral curve $\Sigma$ of $\phi$ by the characteristic polynomial of $\phi$ \eqref{spectral}, $\Sigma\hookrightarrow T^*C$.
\item In \cite{OD12} we considered   a \textit{meromorphic} Higgs pair. We construct the spectral curve  $\Sigma$ as the zero divisor of the characteristic polynomial of $\phi$ inside the compactified cotangent bundle of $C$ that is a ruled surface over $C$:   

$$\Sigma:= (\det(\eta \cdot I_r-\pi^*\phi))_{0}\hookrightarrow \overline{T^*C},$$
\noindent
where $\eta \in H^0(T^*C,\pi^*K_C)$ is the tautological 1-form on $T^*C$ extended as a meromorphic 1-form on the compactification $\overline{T^*C}$. We consider a \textbf{resolution of singularities} of $\Sigma$ by blowing up the ruled surface $\overline{T^*C}$ over $C$,  along the base locus of $\Sigma$.
$$
\begin{CD}
\widetilde{\Sigma}@>{i}>>Bl (\overline{T^*C}) 
\\
@V{}VV @VV{blow-up}V 
\\
\Sigma@>>{i}> \overline{T^*C} 
\end{CD}
$$
\end{itemize}

In \cite{OD8}, \cite{OD12} (see also \cite{OD17}), we extended the framework of topological recursion 
\cite{EO} to singular Hitchin spectral curves, utilizing the birational geometry of ruled surfaces. As a consequence, this extension has led to the discovery of the relation between \textbf{Hitchin spectral curves} and \textbf{Gromov-Witten invariants} in few examples (as the one in Section \ref{catalan} and Section \ref{spectral curve}). More precisely, the novelty of this approach is the discovery of the \textbf{PDE differential recursions of free energies} $F_{g,n}$ in [Definition 6.6, \cite{OD12}] (as well as [Equation 6.5, \cite{OD8}]) that implies the WKB analysis of the quantization Theorem \ref{quantum}. Moreover, the \textbf{PDE differential recursions of free energies} $F_{g,n}$ also implies the well-known integral \textbf{topological recursion} of Eynard-Orantin for a 
spectral curve of genus $0$. The PDE recursion relates the Hitchin spectral curve with enumerative geometry.

\begin{thm}[Quantization Theorem \cite{OD8}, \cite{OD12}]\label{quantum}
For a rank 2 Higgs bundle and $x\in C$, we construct locally a second order differential operator $P(x,\hbar \cdot d/dx)$ whose semi-classical limit recovers the spectral curve $\Sigma$. We also construct a solution $\psi(x, \hbar)$ of equation $P(x,\hbar \cdot d/dx)\psi(x,\hbar)=0$ in terms of principal specialization of the PDE recursion.
\end{thm}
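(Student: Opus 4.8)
The plan is to quantize the characteristic polynomial of $\phi$ and then produce the flat section $\psi$ by a WKB expansion organized by the principal specialization of the free energies $F_{g,n}$. First I would work in a local coordinate $x$ on $C$ and write the rank $2$ characteristic polynomial from \eqref{spectral} as a monic quadratic in the fiber variable of $\overline{T^*C}$,
$$
\det\big(\eta\cdot I_2-\pi^*\phi\big)=\big(y^2-a_1(x)\,y+a_0(x)\big)(dx)^2,
$$
where $\eta=y\,dx$ and $a_1,a_0$ are the (meromorphic) spectral data $-\tr(\phi)/dx$ and $\det(\phi)/(dx)^2$. The quantum curve is defined by the quantization rule $y\mapsto-\hbar\,d/dx$ together with a finite set of $\hbar$-linear correction terms fixed by operator ordering,
$$
P\Big(x,\hbar\tfrac{d}{dx}\Big)=\hbar^2\frac{d^2}{dx^2}+\hbar\,a_1(x)\frac{d}{dx}+a_0(x)+\hbar\,b_1(x)+\cdots.
$$
Under the identification $\hbar\,d/dx\leftrightarrow-y$ the principal symbol of $P$ at $\hbar=0$ is exactly the defining polynomial of $\Sigma$, so the semi-classical limit recovers the spectral curve; this step is essentially constructive once the corrections $b_j$ are pinned down.

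Second, I would substitute the WKB ansatz $\psi(x,\hbar)=\exp\big(\sum_{m\ge0}\hbar^{m-1}S_m(x)\big)$ into $P\psi=0$. Writing $S'=\sum_m\hbar^m S_m'$, the identities $\hbar\,\psi'=S'\psi$ and $\hbar^2\psi''=\big(\hbar S''+(S')^2\big)\psi$ convert $P\psi=0$ into the Riccati equation $\hbar\,S''+(S')^2+a_1S'+a_0+\hbar\,b_1+\cdots=0$. Collecting powers of $\hbar$ yields a recursion for the $S_m'$: the order $\hbar^0$ equation $(S_0')^2+a_1S_0'+a_0=0$ reproduces a branch of $\Sigma$, while each higher order determines $S_m'$ uniquely from $S_0,\dots,S_{m-1}$ by solving a linear equation whose symbol is $2S_0'+a_1$. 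Integrating gives the $S_m$.

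Third, and this is the heart of the matter, I would identify the WKB coefficients with the principal specialization of the free energies. Following \cite{OD8,OD12}, the $F_{g,n}$ are defined on the resolved spectral curve $\widetilde\Sigma$ by the PDE differential recursion, and the assertion is the identity
$$
\sum_{m\ge0}\hbar^{m-1}S_m=\sum_{2g-2+n\ge-1}\frac{\hbar^{2g-2+n}}{n!}\,F_{g,n}(x,\dots,x),
$$
equivalently $S_m=\sum_{2g-2+n=m-1}\tfrac1{n!}F_{g,n}(x,\dots,x)$. I would prove this by induction on $2g-2+n$: the $(0,1)$ and $(0,2)$ unstable terms seed the induction and reproduce $S_0$ and $S_1$, and one checks that the PDE recursion, after setting all arguments equal to $x$, becomes precisely the linear equation for $S_m'$ extracted above. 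That the principal specialization commutes with the recursion in this way is the technical core of \cite{OD8,OD12}.

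The hard part will be the treatment of the singularities of $\Sigma$. The PDE recursion lives on the smooth model $\widetilde\Sigma$ obtained by blowing up $\overline{T^*C}$ along the base locus of $\Sigma$ (as in Section \ref{spectral curve} and the blow-up diagram above), and the ramification data of $\widetilde\Sigma\to C$ enters through local expansions in the normalization coordinate $t$ of \eqref{variable}. Matching the globally defined operator $P$ to these local analytic data near the singular fibers, and verifying the integrability (closedness) conditions that let the $S_m'$ be integrated to honest functions $S_m$, is where the real work lies; it is exactly this point that forced the use of the birational geometry of ruled surfaces in \cite{OD12}.
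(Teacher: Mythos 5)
Your proposal follows essentially the same route as the paper, which itself only sketches the construction and defers the details to \cite{OD8,OD12}: define $P$ by the substitution $y\mapsto \hbar\,d/dx$ in the local characteristic polynomial so that the semi-classical limit is $\Sigma$, take $\psi=\exp\big(\sum \frac{1}{n!}\hbar^{2g-2+n}F_{g,n}(x,\dots,x)\big)$ with $F_{g,n}$ given by the PDE recursion, and verify $P\psi=0$ by matching the WKB/Riccati hierarchy with the principal specialization of that recursion, using the blow-up of $\overline{T^*C}$ to handle singular spectral curves. The only cosmetic difference is your allowance for extra ordering corrections $\hbar b_1(x)+\cdots$, which the paper's normal ordering of the rank-two operator does not require.
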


The enumerative geometry example of the Catalan numbers emphasized by equation \eqref{spec cata} is locally encaptured in the framework of Hitchin systems by Example in the Section \ref{spectral curve}. Following this approach, assume that the spectral curve of the Higgs bundle has the local expression
$$y^2-\tr \phi(x)\cdot y+\det \phi(x)=0.$$
The quantum curve associated to this spectral curve is a Rees $D$-module, locally given by the second order differential operator obtained by replacing the $y$ variable by $\hbar \frac{d}{dx}$ (as in Theorem \ref{thm MS})
$$P(x,\hbar \cdot d/dx)=\left(\hbar\cdot \frac{d}{dx}\right)^2-\tr \phi(x) \cdot \hbar \cdot\frac{d}{dx}+\det \phi(x).$$
The generating function of free energies is
$$\psi(x, \hbar)=\exp\left(\sum_{2g-2+n\ge -1}\frac{1}{n!}\cdot \hbar^{2g-2+n} \cdot F_{g,n}(x,\dots,x)
\right)=0,$$
where $F_{g,n}(x_1, \ldots, x_n)$ are the free energies defined by the PDE recursion 
of [Definition 6.6, \cite{OD12}]. If the spectral curve $\Sigma$ is a singular curve, then the differential operator $P(x,\hbar \cdot d/dx)$ has irregular singularities and $\psi$ has essential singularities. The asymptotic expansion  of $\psi$ (see e.g. [Definition 1.1, \cite{OD17}]) as in the Catalan example \eqref{spec cata} around its singularity has coefficients that encode information of Gromov-Witten invariants (Section \ref{GWP}, see also the Airy example of [Section 1, \cite{OD17}]). 
\newpage

\section{The metamorphosis of quantum curves into opers}\label{section 5}
\label{NAH}

From now on we will focus on holomorphic Higgs bundles $(E, \phi)$ on  a Riemann surface $C$ of genus at least two. 

\subsection{Projective coordinate system} 
We recall that a \textit{universal covering} is a covering space that is simply connected.
By Riemann uniformization theorem, every simply connected Riemann surface is biholomorphic to $\bP^1$, $\mathbb{C}$, or to the upper half-plane $\mathbb{H}:=\{z\in \mathbb{C}| Im (z)>0\}$  with a global coordinate $z$. Therefore for a Riemann surface of genus at least two, the universal covering is the upper half-plane.

Notice that the global coordinate on $\mathbb{H}$ induces, by the quotient map $\pi: \mathbb{H}\stackrel{}{\rightarrow} C$, a particular coordinate system on the Riemann surface $C$. Indeed, there is a faithful representation 
$$
\rho:\pi_1(C)\lrar SL(2,\bR)
$$
such that $C \isom \bH\big/\rho\big(\pi_1(C)\big)$,
where $SL(2,\bR)$ acts on $\bH$ through
the projection
$$
0\lrar \bZ/2\bZ \lrar SL(2,\bR) \lrar
PSL(2,\bR) = \Aut(\bH) \lrar 0.
$$
We can give a particular coordinate system on $C$
using the universal covering map $\pi:\bH \lrar C$.
Let 
$$
C = \bigcup_\a U_\a
$$
be an open finite cover of $C$.
For each coordinate neighborhood $U_\a$, 
choose a contractible open subset $\widetilde{U}_\a
\subset\bH$ for which the map
$$
\pi:\widetilde{U}_\a\overset{\sim}{\lrar}U_\a\subset C
$$
is a biholomorphic map. Let us denote by $z_\a$
the local coordinate defined on $U_\a$ that corresponds
to the global coordinate $z$ restricted on 
$\widetilde{U}_\a$. Then on each $U_\a \cap U_\b$,
we have a \emph{M\"obius} coordinate transformation
\be
\label{Mobius}
z_\a = \frac{a_{\a\b}\cdot z_\b + b_{\a\b}}
{c_{\a\b} \cdot z_\b + d_{\a\b}}, 
\qquad 
\begin{bmatrix}
a_{\a\b}&b_{\a\b}\\
c_{\a\b}&d_{\a\b}
\end{bmatrix}
\in SL(2,\bR).
\ee
In what follows, we choose and fix a M\"obius 
coordinate system on $C$.

Since 
$$
dz_\a = \frac{1}{(c_{\a\b} \cdot z_\b + d_{\a\b})^2}
\; \cdot dz_\b,
$$
the transition function for the canonical line bundle
$K_C$ of $C$ is given by
the cocycle 
$$
\left\{\xi_{\a\b}=\frac{dz_{\b}}{dz_{\a}}=(c_{\a\b} \cdot z_\b + d_{\a\b})^2\right\}
\quad \text{on} \quad U_\a\cap U_\b.
$$
\begin{enumerate}
\item We choose and fix a theta characteristic,
or \textbf{a spin structure for $C$}, i.e. a line bundle $K_C^{\half}$ such that 
$(K_C^{\half})^{\tensor 2} \isom K_C$.
\item Let $\{\xi_{\a\b}\}$ denote the $1$-cocycle 
corresponding to $K_C^{\half}$
with respect to the M\"obius coordinate system. 
\item The transition functions for $K_C^{\frac{1}{2}}$ are given by
\be 
\label{spin}
\xi_{\a\b} = \pm (c_{\a\b}\cdot z_\b + d_{\a\b}).
\ee
\end{enumerate}
The choice of the $\pm$ sign here is exactly 
an element of $H^1(C,\bZ/2\bZ) = (\bZ/2\bZ)^{2g}$,
which classifies the spin structure of $C$.

\begin{Def}[Gunning 1967 \cite{Gun}]\label{gun}
A \textit{projective coordinate system} on $C$ is a coordinate system on which transition function is given by a \emph{M\"obius} transformation.
\end{Def}
$$
C = \bigcup_\a U_\a, \hskip .1in z_\a\in U_{\a}, \hskip .1in z_\a = \frac{a_{\a\b} \cdot z_\b + b_{\a\b}}
{c_{\a\b}\cdot z_\b + d_{\a\b}}, 
 \hskip .1in
\begin{bmatrix}
a_{\a\b}&b_{\a\b}\\
c_{\a\b}&d_{\a\b}
\end{bmatrix}
\in SL(2,\bC).
$$ 

\subsection{Hitchin section in rank two}\label{HC rank 2}

Equipped with the choice of a spin structure for $C$ and   the transition functions
$\xi_{\a\b}$ for the line bundle $K_C^\half$, we  define the \textit{Hitchin section in rank two}.

Recall the Hitchin map for the $SL_2(\mathbb{C})$-Higgs bundles sends $\cM_{\Dol}\owns
(E,\phi)\stackrel{H}{\rightarrow}\det(\phi)\in B$. 
\begin{Def}\label{section}
For each choice of $q\in  H^0(C, K_C^2)=B$, the \textbf{Hitchin section} is the holomorphic Lagrangian inside the Dolbeault moduli space $\mathcal{M}_{\Dol}$, given by 
$$s(q)=\left(E_0:=K_C^{\frac{1}{2}}\oplus K_C^{-\frac{1}{2}}, \phi(q):=\begin{bmatrix}
0&q\\
1&0
\end{bmatrix}\right)
.$$

\end{Def}

\begin{figure}[htb]
\includegraphics[width=2.5in]{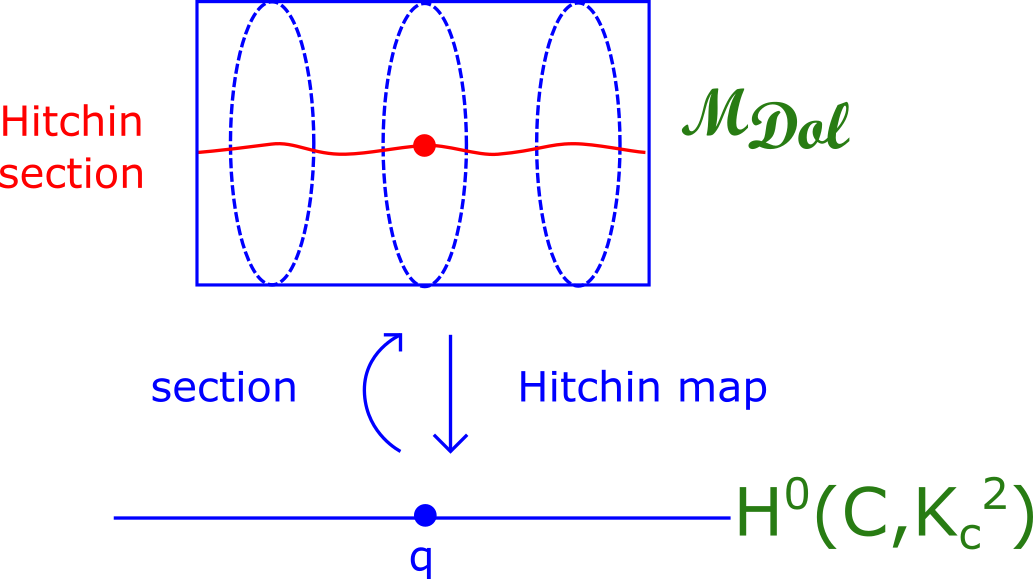}
\end{figure}

Let $f_{\a\b}=\begin{pmatrix}
\xi_{\a\b}&0
\\ 0& \xi_{\a\b}^{-1}
\end{pmatrix}$ be the transition functions of the vector bundle $K_{C}^{\frac{1}{2}}\oplus K_{C}^{-\frac{1}{2}}$.
If  the quadratic differential $q$ has 
a local form $q(z)|_{U_{\a}}=q_\a(z_{\a})\cdot dz_{\a}^2$, then the Higgs field on the Hitchin section $\phi(q)$, which is a matrix valued $1$-form, is given by 
$$
\phi_{\a}=\begin{pmatrix}
0&q_\a(z_{\a})\cdot dz_{\a}\\ dz_{\a}& 0
\end{pmatrix}.
$$
Notice that the Higgs field $\phi(q)$ satisfies the compatibility condition \eqref{bundle}:
\begin{align}
f_{\a\b}\cdot\phi_{\b}\cdot f_{\a\b}^{-1}&=
\nonumber
\begin{pmatrix}
\xi_{\a\b}&0\\ 0& \xi^{-1}_{\a\b}
\end{pmatrix}\cdot
\begin{pmatrix}
0&q_\b(z_{\b})\cdot dz_{\b}\\ dz_{\b} & 0
\end{pmatrix}\cdot
\begin{pmatrix}
\xi_{\a\b}^{-1}&0\\ 0& \xi_{\a\b}
\end{pmatrix}\\
\nonumber
&=\begin{pmatrix}
0& \xi_{\a\b}^2\cdot q_\b(z_{\b})\cdot dz_{\b}\\ \xi_{\a\b}^{-2} \cdot dz_{\b}& 0
\end{pmatrix}
=\phi_{\a}.
\end{align}
It follows from noticing
$q_\b(z_{\b})dz_{\b}^2=q_\a(z_{\a})dz_{\a}^2$ and
$\xi_{\a\b}^{2}=\frac{dz_{\b}}{dz_{\a}}$, concluding that
$$\xi_{\a\b}^{2}\cdot q_\b(z_{\b})\cdot dz_{\b}=q_\b(z_{\b})\cdot \frac{dz_{\b}^2}{dz_{\a}}=
q_\a(z_{\a})\cdot dz_{\a}.$$

The stability of  the Higgs bundle $s(q)$ 
with 
$$\phi(q)=\begin{bmatrix}
0&q\\
1&0
\end{bmatrix}:K_C^{\frac{1}{2}}\oplus K_C^{-\frac{1}{2}}\stackrel{}{\rightarrow}
K_C^{\frac{3}{2}}\oplus K_C^{\frac{1}{2}}
$$
is seen as follows.
First observe that if $q\neq 0$, then
any vector sub-bundle of $E_0$, either
$0\oplus K_C^{\frac{1}{2}}$ or $K_C^{-\frac{1}{2}}\oplus 0$, is not $\phi$ invariant. If $q=0$, then the vector sub-bundle  $0\oplus K_C^{-\frac{1}{2}}$ is $\phi$-invariant since it is mapped to zero by $\phi$. However, its slope $1-g$ is negative, since we assume
$g\ge 2$.

\begin{rem}
The Hitchin section (sometimes called the Hitchin component) is a \emph{section} of the Hitchin fibration \ref{rk 2} in the
sense that it intersects with  each fiber of $H$ 
exactly once. 
For the case of $SL_2(\bC)$-Higgs bundles (rank 2),
it is also a section with respect to the Hitchin map $H$, since $H\circ s= Id_B$. However,
in general, the Hitchin sections we construct
in Section 6 are not the section with respect to
the Hitchin map $H$, because $H\circ s \ne Id_B$.
\end{rem}

For $q\in  H^0(C, K_C^2)$, the differential operator for a Higgs pair on a Hitchin section \eqref{section} in Theorem \ref{quantum}, i.e., the \textit{quantum curve}  $P(x,\hbar \cdot d/dx)|_{\hbar=1}=\frac{d^2}{dx^2}-q(x)$, is not globally defined. This is because
unlike the exterior differentiation $d$
which is globally defined,
the second order differentiation $d^2/dx^2$
has no global meaning. However, in the \textit{projective coordinate system}, the differential equation $\left(\frac{d^2}{dx^2}-q(x)\right)\psi(x)=0$ 
makes sense globally on the curve $C$,
provided that $\psi$ is a (multi-valued) section 
of $K_C^{-\half}$. 
More precisely, with respect to a coordinate change  $x=x(u)$, we have 
$$
\psi\big(x(u)\big) \frac{1}{\sqrt{dx}}
= \psi(u)\frac{1}{\sqrt{du}}
\quad
\Longleftrightarrow
\quad
e^{-\half \log x'(u)} \psi\big(x(u)\big)
= \psi(u),
$$
where $x'(u) = \frac{dx}{du}$, 
and $q(u)du^2 = q(x)dx^2$.
Then
\begin{align*}
0&=du^2 \cdot e^{\half \log x'(u)}\left[\left(
\frac{d}{du}\right)^2-q(u)^2\right]\psi(u)
\\
&=
du^2 \cdot e^{\half \log x'(u)}\left[\left(
\frac{d}{du}\right)^2-q(u)^2\right]
e^{-\half \log x'(u)}\psi\big(x(u)
\big)
\\
&=
du^2\cdot\left(e^{\half \log x'(u)}
\frac{d}{du}
e^{-\half \log x'(u)}\right)^2
\psi\big(x(u)\big)
-dx^2 q(x)\psi(x)
\\
&=
du^2\cdot \left(\frac{d}{du}-\half \frac{x''}{x'}
\right)^2 
\psi\big(x(u)\big)
-dx^2 q(x)\psi(x)
\\
&=
du^2\cdot \left[\left(\frac{d}{du}\right)^2-
 \frac{x''}{x'}\frac{d}{du}-\half
 \left(\left(\frac{x''}{x'}\right)'-
 \half \left(\frac{x''}{x'}\right)^2\right)\right]
\psi\big(x(u)\big)
-dx^2 q(x)\psi(x)
\\
&=
du^2\cdot \left[\left(\psi_x(x)x'\right)_u-\frac{x''}{x'}
\psi_x(x)x'\right]
-dx^2 q(x)\psi(x)
\\
&=
\psi_{xx}(x)\left(\frac{dx}{du}\right)^2du^2
+du^2\cdot \left(\psi_x(x)x''-x''\psi_x(x)\right)
-dx^2 q(x)\psi(x)
\\
&=dx^2\cdot \left[\left(\frac{d}{dx}\right)^2-q(x)
\right]\psi(x).
\end{align*}
Here, we have used the fact that the 
Schwarzian derivative 
$$
s_u(x):=\left(\frac{x''(u)}{x'(u)}\right)'-\frac{1}{2}\left(
\frac{x''(u)}{x'(u)}\right)^2
$$
is identically $0$ if $x=x(u)= \frac{a_{\a\b} \cdot u + b_{\a\b}}
{c_{\a\b}\cdot u + d_{\a\b}}, 
 \hskip .1in
\begin{bmatrix}
a_{\a\b}&b_{\a\b}\\
c_{\a\b}&d_{\a\b}
\end{bmatrix}
\in SL(2,\bC).
$ is a M\"obius 
transformation of Definition \eqref{gun}. 
We thus conclude that the quantum curve $P(x,\hbar \cdot d/dx)|_{\hbar=1}$ in Theorem \ref{quantum} is globally defined as a \emph{twisted} $D$-module
acting on the sheaf $K_C^{-\half}$. We leave to the interested reader to check the details of this computation.
The details of this consideration will be provided in \cite{OD21,OD22}. 

Let us now present  an \textit{intuitive definition} of opers.

\begin{Def}\label{oper}
An \textit{oper} on an algebraic curve
$C$ is a \textit{globally defined} differential operator
of order $r$ acting on $K_C^{-\frac{r-1}{2}}$.
\end{Def}

\begin{rem}\textbf{Importance of Gunning's definition.} 
In a projective coordinate system of $C$, the quantum curve in Theorem \ref{quantum} $P(x,\hbar \cdot d/dx)|_{\hbar=1}$ is an oper! 
\end{rem}

\subsection{A family of 
Deligne's $\hbar$-connections.}\label{Deligne}
For a Higgs bundle in Definition \ref{section},
we interpret
$P(x,\hbar \cdot d/dx)\psi=0$ as $\nabla^{\hbar}\begin{bmatrix}
-\hbar \psi'\\
\psi
\end{bmatrix}=0.$ Indeed, the quantum curve  of the Higgs field 
$$\left( E_0=K_C^{\frac{1}{2}} \oplus K_C^{-\frac{1}{2}}, \phi=\begin{pmatrix}
0&q(x)\cdot dx\\ dx & 0 
\end{pmatrix} 
\right)$$
 is $\left(\hbar^2 \frac{d^2}{dx^2}-q(x)\right)\cdot \psi(x,\hbar) = 0$. This second order differential equation corresponds to the linear system of ODE, $\left(\hbar\cdot \nabla^{\hbar}\right) \begin{bmatrix}
 -\hbar \cdot \psi\\
\psi
\end{bmatrix}=0$,
where $\nabla^{\hbar}$ is an $\hbar$-deformation 
family of opers
\be \label{oper}
\nabla^{\hbar}=d+\frac{1}{\hbar} \cdot \begin{pmatrix}
0&q(x)\cdot dx\\ dx& 0
\end{pmatrix}.
\ee

\begin{quest}
What is the corresponding vector bundle in which this family of connections $\nabla^{\hbar}$ is defined as in Remark \ref{conditions},  \eqref{gauge}?
\end{quest}

To answer this question we  interpret the complex number $\hbar$ of Theorem \ref{quantum} as an extension class of line bundles $\hbar\in \mathbb{C}=Ext^1(K_C^{-\frac{1}{2}}, K_C^{\frac{1}{2}})\isom H^1(C, K_C)\isom H^0(C, \mathcal{O}).$

\begin{thm}[\cite{Gun}]\label{gun1}
For every $\hbar\in \mathbb{C}$, there exists a unique 
extension 
\be \label{ses}
0\stackrel{}{\rightarrow}K_C^{\frac{1}{2}}\stackrel{}{\rightarrow}V_{\hbar}\stackrel{}{\rightarrow}K_C^{-\frac{1}{2}}\stackrel{\rightarrow}{}0
\ee
such that 
\begin{enumerate}
\item  the rank $2$
vector bundle $V_{\hbar}$ is given by transition functions $\{g_{\a\b}^{\hbar}\}$, $g_{\a\b}^{\hbar}:=\begin{pmatrix}
\xi_{\a\b}&\hbar\cdot\frac{d\xi_{\a\b}}{dz_{\b}}
\\ 0& \xi_{\a\b}^{-1}
\end{pmatrix}$,

\item $V_0\cong K_C^{\frac{1}{2}}\oplus K_C^{-\frac{1}{2}}$, and
\item for $\hbar\neq 0$, all the vector bundles $V_{\hbar}$  are isomorphic.

\end{enumerate}
\end{thm}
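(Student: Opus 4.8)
The plan is to dispatch the three assertions in turn, after first recording the abstract classification of the extensions so that the explicit cocycle $\{g^\hbar_{\a\b}\}$ can be matched against it. Since $K_C^{\pm\half}$ are line bundles, the universal coefficient computation for sheaf $\Ext$ gives
$$
\Ext^1\big(K_C^{-\half}, K_C^{\half}\big) \isom H^1\big(C, K_C^{\half}\tensor (K_C^{-\half})^{-1}\big) = H^1(C, K_C) \isom H^0(C,\cO)^\vee \isom \bC,
$$
the last step being Serre duality, exactly as anticipated in the paragraph preceding the theorem. Thus extensions of $K_C^{-\half}$ by $K_C^{\half}$ are classified up to equivalence by a single complex number, the split extension corresponding to $0$. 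This identifies the parameter space with the $\hbar$-line and, for free, furnishes existence and uniqueness of an extension realizing each prescribed class; the remaining task is to show that the bundle built from $\{g^\hbar_{\a\b}\}$ realizes the class $\hbar$ up to a fixed nonzero scalar.

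First I would verify that $\{g^\hbar_{\a\b}\}$ is a genuine $1$-cocycle. Using $\xi_{\a\b}\xi_{\b\g} = \xi_{\a\g}$ together with $\frac{dz_\b}{dz_\g} = \xi_{\b\g}^{-2}$ and the chain rule, a direct computation shows that the only nontrivial entry of the product $g^\hbar_{\a\b}\,g^\hbar_{\b\g}$, namely
$$
\xi_{\a\b}\cdot\hbar\frac{d\xi_{\b\g}}{dz_\g} + \hbar\frac{d\xi_{\a\b}}{dz_\b}\cdot\xi_{\b\g}^{-1},
$$
equals $\hbar\,\frac{d\xi_{\a\g}}{dz_\g}$, so that $g^\hbar_{\a\b}\,g^\hbar_{\b\g} = g^\hbar_{\a\g}$ and $V_\hbar$ is well defined. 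The upper-triangular shape then exhibits $K_C^{\half}$ (diagonal entry $\xi_{\a\b}$) as a sub-line-bundle with quotient $K_C^{-\half}$ (diagonal entry $\xi_{\a\b}^{-1}$), producing the asserted short exact sequence \eqref{ses}.

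Next I would identify the extension class. Lifting the generator of the quotient by the second frame vector over each $U_\a$ and comparing lifts on overlaps produces the \v Cech cocycle $\{-\hbar\,c_{\a\b}\,\xi_{\a\b}\}$ valued in $\mathcal{H}om(K_C^{-\half},K_C^{\half}) = K_C$, where $c_{\a\b} := d\xi_{\a\b}/dz_\b$. This class is linear in $\hbar$, so it suffices to show the underlying class $[\omega] = [\{c_{\a\b}\xi_{\a\b}\}]\in H^1(C,K_C)$ is nonzero. Concretely, unwinding the definition of equivalence, $V_\hbar$ splits for a given $\hbar\neq 0$ exactly when $-\hbar c_{\a\b}\xi_{\a\b} = f_\a - \xi_{\a\b}^2 f_\b$ admits a holomorphic $0$-cochain solution $\{f_\a\}$, and the assertion is that it does not. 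I would establish this either by computing the image of $[\omega]$ under the trace isomorphism $H^1(C,K_C)\isom\bC$ (a residue sum governed by $\deg K_C = 2g-2\neq 0$) and finding it nonzero, or, equivalently, by invoking Gunning's identification of $V_\hbar$ with the bundle of the projective structure, which is indecomposable for $\hbar\neq 0$. This nonvanishing is the single genuinely nontrivial point, and the main obstacle; everything else is formal.

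Finally, assertions (2) and (3) are immediate. At $\hbar = 0$ the cocycle is diagonal, $g^0_{\a\b} = \diag(\xi_{\a\b}, \xi_{\a\b}^{-1})$, which is precisely the transition cocycle of $K_C^{\half}\oplus K_C^{-\half}$, so $V_0 \isom K_C^{\half}\oplus K_C^{-\half}$. For $\hbar_1, \hbar_2 \neq 0$, the constant gauge transformation $u = \diag(1, \hbar_2/\hbar_1)$ satisfies $u\,g^{\hbar_2}_{\a\b}\,u^{-1} = g^{\hbar_1}_{\a\b}$ on every overlap, hence defines a global bundle isomorphism $V_{\hbar_2}\isom V_{\hbar_1}$; equivalently, rescaling the quotient $K_C^{-\half}$ rescales the extension class, so all nonzero classes yield isomorphic total spaces.
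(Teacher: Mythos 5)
Your proposal is correct, and its computational core coincides with the paper's proof: you verify the cocycle condition by checking that the off-diagonal entry of $g^\hbar_{\a\b}g^\hbar_{\b\g}$ equals $\hbar\,d\xi_{\a\g}/dz_\g$ (the paper derives the same identity \eqref{eu} by logarithmic differentiation of $\xi_{\a\g}=\xi_{\a\b}\xi_{\b\g}$, whereas you obtain it more directly from the product rule together with $dz_\b/dz_\g=\xi_{\b\g}^{-2}$); you note that $g^0_{\a\b}$ is diagonal, giving (2); and you exhibit a constant diagonal gauge transformation identifying $V_{\hbar_1}$ with $V_{\hbar_2}$ --- the paper uses $\diag(\hbar^{-\half},\hbar^{\half})$ to compare each $V_\hbar$ with $V_1$, you use $\diag(1,\hbar_2/\hbar_1)$; both are legitimate for an isomorphism of vector bundles, though only the former has determinant one. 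What you add, and the paper omits, is the a priori classification $\Ext^1(K_C^{-\half},K_C^{\half})\isom H^1(C,K_C)\isom \bC$ together with the identification of the extension class of $V_\hbar$ with $\hbar$ times the fixed class $[\omega]=[\{\xi_{\a\b}\,d\xi_{\a\b}/dz_\b\}]$; this is what gives actual content to the word ``unique'' in the statement and to the sentence following the theorem that calls $V_1$ the unique non-trivial extension. The one step you leave as a sketch --- the nonvanishing of $[\omega]$ in $H^1(C,K_C)$ --- is not needed for items (1)--(3) as enumerated, but it is needed for your stronger conclusion that $V_\hbar$ is non-split for $\hbar\neq 0$; it does hold (the cocycle is $\tfrac12\,\tfrac{d}{dz_\b}\big(\tfrac{dz_\b}{dz_\a}\big)$, the classical obstruction to an affine structure on $C$, whose image under $H^1(C,K_C)\isom\bC$ is a nonzero multiple of $\deg K_C=2g-2$), so either of your two proposed routes closes the gap; the paper itself asserts this non-triviality without proof.
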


We denote by $V:=V_{\hbar}|_{\hbar=1}$ the unique non-trivial extension of $K_C^{-\frac{1}{2}}$
by $K_C^{\frac{1}{2}}$. We will give a more detail of higher-rank cases in \cite{OD21, OD22}.

\begin{proof} We recall that $\xi_{\a\b}$ of \eqref{spin} are  transition functions of $K_C^{\frac{1}{2}}$.

\begin{enumerate}
\item It is an easy computation to check that $g^{\hbar}_{\a\b}$ are transition functions of $V_{\hbar}$ satisfying the $1$-cocycle condition. Denote by $\sigma_{\a\b}=\frac{d\xi_{\a\b}}{dz_{\b}}$; according to \eqref{spin}, it is a constant.
First, we have
\be \label{ie}
g^{\hbar}_{\a\b}\cdot g^{\hbar}_{\b\gamma}=\begin{pmatrix}
\xi_{\a\b}&\hbar\cdot \sigma_{\a\b}
\\ 0& \xi_{\a\b}^{-1}
\end{pmatrix} \cdot 
\begin{pmatrix}
\xi_{\b\gamma}&\hbar\cdot \sigma_{\b\gamma}
\\ 0& \xi_{\b\gamma}^{-1}
\end{pmatrix}=\begin{pmatrix}
\xi_{\a\b}\cdot\xi_{\b\gamma} 
&\hbar\cdot (\xi_{\a\b} \sigma_{\b\gamma}+ \xi_{\b\gamma}^{-1} \sigma_{\a\b})
\\ 0& (\xi_{\a\b}\cdot\xi_{\b\gamma})^{-1}
\end{pmatrix}.
\ee
From 
 the $1$-cocycle condition  $\xi_{\a\b}\cdot \xi_{\b\gamma}=\xi_{\a\gamma}$ and  $\xi_{\a\b}^2=\frac{dz_{\b}}{dz_{\a}}$,  we claim
\be \label{eu}
\sigma_{\a\gamma}=\xi_{\a\b} \sigma_{\b\gamma}+ \xi_{\b\gamma}^{-1} \sigma_{\a\b}.
\ee
To see this, apply the logarithmic differentiation to $\xi_{\a\gamma}=\xi_{\a\b}\cdot \xi_{\b\gamma}$. We obtain
$$
dz_{\gamma}\cdot \xi^{-1}_{\a\gamma}\cdot \frac{d \xi_{\a\gamma}}{dz_{\gamma}}=
dz_{\b}\cdot \xi^{-1}_{\a\b}\cdot\frac{d \xi_{\a\b}}{dz_{\b}}+dz_{\gamma}\cdot \xi^{-1}_{\b\gamma}\cdot\frac{d \xi_{\b\gamma}}{dz_{\gamma}}.$$
Hence
\begin{align*}
\xi^{-1}_{\a\gamma}\cdot \frac{d \xi_{\a\gamma}}{dz_{\gamma}}&=
\frac{dz_{\b}}{dz_{\gamma}}\cdot \xi^{-1}_{\a\b}\cdot\sigma_{\a\b}+ \xi^{-1}_{\b\gamma}\cdot\sigma_{\b\gamma},\nonumber\\
\sigma_{\a\gamma} &=
\xi_{\a\gamma}\cdot\xi_{\gamma\b}^2\cdot \xi^{-1}_{\a\b}\cdot\sigma_{\a\b}+ \xi_{\a\gamma}\cdot\xi^{-1}_{\b\gamma}\cdot\sigma_{\b\gamma}\nonumber\\
&=(\xi_{\a\b}\cdot \xi_{\b\gamma})\cdot\xi_{\b\gamma}^{-2}\cdot \xi^{-1}_{\a\b}\cdot\sigma_{\a\b}+ (\xi_{\a\b}\cdot \xi_{\b\gamma})\cdot\xi^{-1}_{\b\gamma}\cdot\sigma_{\b\gamma}\nonumber\\
 &=\xi_{\b\gamma}^{-1}\cdot\sigma_{\a\b}+ \xi_{\a\b}\cdot\sigma_{\b\gamma}.
\nonumber
\end{align*}
Therefore, $g^{\hbar}_{\a\b}\cdot g^{\hbar}_{\b\gamma}=g^{\hbar}_{\a\gamma}$ in \eqref{ie}.

\item Since the matrix $g_{\a\b}^{\hbar}|_{\hbar=0}$ is diagonal,  the vector bundle it defines splits $$V_0\cong K_C^{\frac{1}{2}}\oplus K_C^{-\frac{1}{2}}.$$

\item For every $\hbar\neq 0$, we show that
vector bundle $V_{\hbar}$ is isomorphic to $V$. 
Indeed, the transition functions $g_{\a\b}^{\hbar}$ and $g_{\a\b}^{\hbar}|_{\hbar=1}$ are compatible with the change of trivialization $u_{\a}:U_{\a}\stackrel{}{\rightarrow}GL_{r}(\mathbb{C})$ (see Section \ref{vb})
\begin{align}
\nonumber
u_{\a} \cdot g_{\a\b}^{\hbar}\cdot u_{\b}^{-1}& =\begin{pmatrix}
\sqrt{\hbar}^{-1} & 0\\ 0 & \sqrt{\hbar}
\end{pmatrix}\cdot
\begin{pmatrix}
\xi_{\a\b}&\hbar\cdot \sigma_{\a\b}\\ 0& \xi^{-1}_{\a\b}
\end{pmatrix}\cdot \begin{pmatrix}
\sqrt{\hbar} & 0\\ 0 & \sqrt{\hbar}^{-1}
\end{pmatrix}\\
&=\begin{pmatrix}
\xi_{\a\b}& \sigma_{\a\b}\\ 0& \xi^{-1}_{\a\b}
\end{pmatrix}=g_{\a\b}^{\hbar}|_{\hbar=1}.
\end{align}
\end{enumerate}
\end{proof}

\begin{lem}\label{gun2}
$\nabla^{\hbar}$ in \eqref{oper} is a connection on $V_{\hbar}$.
\end{lem}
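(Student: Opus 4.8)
The plan is to verify directly that the locally defined operators $\nabla^{\hbar}_{\a} = d + \frac{1}{\hbar}\phi_{\a}$ on each $U_{\a}$ glue to a globally defined connection on the bundle $V_{\hbar}$ whose transition functions $g^{\hbar}_{\a\b}$ were computed in Theorem \ref{gun1}. Concretely, a collection of local connection matrices $\{A_{\a}\}$ defines a connection on $V_{\hbar}$ precisely when it satisfies the gauge compatibility \eqref{gauge} with respect to $\{g^{\hbar}_{\a\b}\}$, namely
$$
A_{\a} = g^{\hbar}_{\a\b}\cdot A_{\b}\cdot (g^{\hbar}_{\a\b})^{-1} - (g^{\hbar}_{\a\b})^{-1}\cdot dg^{\hbar}_{\a\b},
$$
or equivalently, in the cleaner form noted at the end of Remark \ref{conditions}, $dg^{\hbar}_{\a\b} = g^{\hbar}_{\a\b}\cdot A_{\b} - A_{\a}\cdot g^{\hbar}_{\a\b}$. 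So first I would write down the local connection matrix that $\nabla^{\hbar}$ in \eqref{oper} prescribes, namely $A_{\a} = \frac{1}{\hbar}\phi_{\a} = \frac{1}{\hbar}\begin{pmatrix} 0 & q_{\a}(z_{\a})\,dz_{\a} \\ dz_{\a} & 0 \end{pmatrix}$, and then check that this single identity holds.

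Second, I would carry out the matrix computation. The key inputs are already available: the Higgs field satisfies the compatibility $\phi_{\a} = f_{\a\b}\cdot\phi_{\b}\cdot f_{\a\b}^{-1}$ verified just before Definition \ref{section}, where $f_{\a\b} = \operatorname{diag}(\xi_{\a\b}, \xi_{\a\b}^{-1})$ is the diagonal part of $g^{\hbar}_{\a\b}$; and the off-diagonal entry of $g^{\hbar}_{\a\b}$ is $\hbar\cdot\sigma_{\a\b}$ with $\sigma_{\a\b}$ a constant (shown in the proof of Theorem \ref{gun1}, so $d\sigma_{\a\b}=0$). The verification splits into computing $dg^{\hbar}_{\a\b}$ on one side and the commutator-type expression $g^{\hbar}_{\a\b}A_{\b} - A_{\a}g^{\hbar}_{\a\b}$ on the other, and matching them entry by entry as $2\times 2$ matrices of $1$-forms. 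Because $\sigma_{\a\b}$ is constant, $dg^{\hbar}_{\a\b}$ only picks up the derivatives of $\xi_{\a\b}$ and $\xi_{\a\b}^{-1}$, whose relation to $\sigma_{\a\b}$ is exactly $\sigma_{\a\b} = \frac{d\xi_{\a\b}}{dz_{\b}}$; this is what will make the off-diagonal contribution of $\frac{1}{\hbar}\phi$ conspire with $\hbar\cdot\sigma_{\a\b}$ so that the $\hbar$ cancels and the identity closes up.

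I expect the bookkeeping of the off-diagonal entries to be the main obstacle, since that is where the extension class $\hbar$ and the constant $\sigma_{\a\b}$ interact, and where one must use $dz_{\a} = \xi_{\a\b}^{-2}\,dz_{\b}$ together with $\xi_{\a\b}^2 = dz_{\b}/dz_{\a}$ to reconcile the $1$-forms $q_{\a}\,dz_{\a}$ and $q_{\b}\,dz_{\b}$. The diagonal entries should match almost immediately from the abelian gauge relation for the line bundles $K_C^{\pm 1/2}$, so the real content is confirming that the $(1,2)$ and $(2,1)$ entries balance. Once the single gauge identity $dg^{\hbar}_{\a\b} = g^{\hbar}_{\a\b}A_{\b} - A_{\a}g^{\hbar}_{\a\b}$ is established, it follows immediately that $\nabla^{\hbar} = \{d + A_{\a}\}_{\a}$ is a well-defined connection on $V_{\hbar}$, which is the assertion of the lemma. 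I would remark that the $\hbar=0$ specialization recovers the Higgs field as an $\mathcal{O}_C$-linear map on the split bundle $V_0$, consistent with the Deligne $\hbar$-connection interpretation, providing a useful sanity check at the endpoint of the family.
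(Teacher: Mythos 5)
Your proposal is correct and is essentially the paper's own proof: both verify the gauge compatibility of $A_\a=\frac1\hbar\phi_\a$ with the transition functions $g^{\hbar}_{\a\b}$ by a direct $2\times2$ matrix computation, using the constancy of $\sigma_{\a\b}=\frac{d\xi_{\a\b}}{dz_\b}$ and the transformation laws $dz_\a=\xi_{\a\b}^{-2}dz_\b$, $q_\a dz_\a^2=q_\b dz_\b^2$. The only cosmetic difference is that you check the rearranged identity $dg^{\hbar}_{\a\b}=g^{\hbar}_{\a\b}A_\b-A_\a g^{\hbar}_{\a\b}$, whereas the paper computes $g^{\hbar}_{\a\b}A_\b(g^{\hbar}_{\a\b})^{-1}+g^{\hbar}_{\a\b}\,d(g^{\hbar}_{\a\b})^{-1}$ directly; these are equivalent by Remark~\ref{conditions}.
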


\begin{proof}
We first recall equation \eqref{gauge} for a connection $\nabla^{\hbar}=\{d+A^{\hbar}_{\a}\}$ \eqref{oper} on a vector bundle $V_{\hbar}$ given by transition functions $g_{\a\b}^{\hbar}$, where 

\begin{align}
\nonumber
A^{\hbar}_{\a}&=\frac{1}{\hbar}\begin{pmatrix}\label{connection}
0&q_\a(z_{\a})\cdot dz_{\a}\\ dz_{\a}& 0
\end{pmatrix}.\\
A^{\hbar}_{\a}&=g_{\a\b}^{\hbar} \cdot A^{\hbar}_{\b}\cdot (g_{\a\b}^{\hbar})^{-1}+g_{\a\b}^{\hbar}\cdot d(g_{\a\b}^{\hbar})^{-1}\\
\nonumber
g_{\a\b}^{\hbar} \cdot A^{\hbar}_{\b}\cdot (g_{\a\b}^{\hbar})^{-1}&=\frac{1}{\hbar}
\begin{pmatrix}
\xi_{\a\b}&\hbar\cdot \sigma_{\a\b}\\ 0& \xi^{-1}_{\a\b}
\end{pmatrix}\cdot
\begin{pmatrix}
0&q_\b(z_{\b})\cdot dz_{\b}\\ dz_{\b}& 0
\end{pmatrix}\cdot
\begin{pmatrix}
\xi_{\a\b}^{-1}&-\hbar\cdot\sigma_{\a\b}\\ 0& \xi_{\a\b}
\end{pmatrix}.\\
\nonumber
&=\begin{pmatrix}
\sigma_{\a\b}\cdot\xi_{\a\b}^{-1}&-\hbar\cdot\sigma_{\a\b}^2+ \frac{1}{\hbar}\xi_{\a\b}^2\cdot q_\b(z_{\b})\\ \frac{1}{\hbar}\cdot\xi_{\a\b}^{-2}& -\xi_{\a\b}^{-1}\cdot \sigma_{\a\b}
\end{pmatrix}\cdot dz_{\b}.\\
\nonumber
g_{\a\b}^{\hbar}\cdot d(g_{\a\b}^{\hbar})^{-1}&=-\frac{dg_{\a\b}^{\hbar}}{dz_{\b}}\cdot(g_{\a\b}^{\hbar})^{-1}\cdot dz_{\b}\\
\nonumber
&=-\begin{pmatrix}
\sigma_{\a\b}&\hbar\cdot\frac{d\sigma_{\a\b}}{dz_{\b}}\\ 0& \frac{d\xi^{-1}_{\a\b}}{dz_{\b}}
\end{pmatrix}\cdot \begin{pmatrix}
\xi_{\a\b}^{-1}&-\hbar\cdot\sigma_{\a\b}\\ 0& \xi_{\a\b}
\end{pmatrix}\cdot dz_{\b}\\
\nonumber
&=-\begin{pmatrix}
\sigma_{\a\b}\cdot\xi_{\a\b}^{-1}&-\hbar\cdot\sigma_{\a\b}^2+\hbar\cdot \xi_{\a\b}\cdot\frac{d^2\xi_{\a\b}}{dz_{\b}^2}\\ 0& -\xi_{\a\b}^{-1}\cdot \sigma_{\a\b}
\end{pmatrix}\cdot dz_{\b}.
\nonumber
\end{align}
Relation \eqref{spin} implies $\frac{d^2\xi_{\a\b}}{dz_{b}^2}=0.$ 
We conclude that
\begin{align}
\nonumber
g_{\a\b}^{\hbar} \cdot A^{\hbar}_{\b}\cdot (g_{\a\b}^{\hbar})^{-1}+g_{\a\b}^{\hbar}\cdot d(g_{\a\b}^{\hbar})^{-1}&=\frac{1}{\hbar}\cdot
\begin{pmatrix}
0&q_\b(z_{\b})\cdot\frac{dz_{\b}^2}{dz_{a}}\\ dz_{\a}& 0
\end{pmatrix}=\frac{1}{\hbar}\cdot \begin{pmatrix}
0&q_\a(z_{\a})\cdot dz_{a}\\ dz_{\a}& 0
\end{pmatrix}=A_{\a}^{\hbar}.
\end{align}
\end{proof}

By fixing a complex structure of the curve $C$
Gunning proved the following isomorphism as affine spaces  in \cite{Gun}
\be
\begin{aligned}
H^{0}(C, K_C^{2})&\cong \text{moduli space of $SL_2(\mathbb{C})$-opers on $C$}\\
&\cong \text{moduli space of \textit{projective coordinate systems} on $C$}
\\
&\qquad 
\text{that
subordinate the complex structure of $C$}.
\end{aligned}
\ee
Since the space of quadratic differentials $H^{0}(C, K_C^{2})$ is a vector space, it 
seems to imply that the holomorphic Lagrangian of opers also inherits an origin, corresponding to $q=0$.
Indeed,
 $\nabla_{unif}=d+\begin{pmatrix}
0&0\\ dx& 0
\end{pmatrix}
$
that we call the \textit{uniformizing oper}, will 
play an important role in the next two sections. 
However, we note that such a choice does not
come from algebraic geometry, as we see below.

The computations performed in Definition \ref{section} and Lemma \ref{gun2} show that the family $\hbar \cdot \nabla^{\hbar}$, as well as the \textit{quantum curve} of Theorem \ref{quantum}, is a \textbf{$\hbar$-connection of Deligne}.  This is a family of deformations that interpolates a Higgs field $\hbar \cdot \nabla^{\hbar}|_{\hbar=0}$  and a genuine 
connection $\hbar \cdot \nabla^{\hbar}|_{\hbar=1}$. We thus conclude that the Dumitrescu-Mulase quantum curve of Theorem \ref{quantum}, ${\hbar\nabla^{\hbar},} \text{ is an } \hbar\text{-deformation family}$ of connections constructing a \textit{holomorphic passage} form a Higgs field on the Hitchin section $\hbar \cdot \nabla^{\hbar}|_{\hbar=0}$ to an oper $\hbar \cdot \nabla^{\hbar}|_{\hbar=1}$, once we
choose a M\"obius coordinate system:
$$\left(K_C^{\frac{1}{2}}\oplus K_C^{-\frac{1}{2}}, \begin{bmatrix}
0&q\\
1&0
\end{bmatrix}
\right)\stackrel{DM}{\rightarrow} \left(V, d+\begin{bmatrix}
0&q(x)dx\\
dx&0
\end{bmatrix}
\right).$$


\section{Hitchin moduli spaces for the Lie group $G = SL_r(\mathbb{C})$}\label{}
To introduce Gaiotto's correspondence, we need to consider  Hitchin moduli spaces for simple complex
Lie group $G$. In this paper, we restrict 
ourselves to the case of 
 $G = SL_r(\mathbb{C})$. An $SL_r(\mathbb{C})$-Higgs bundle is a pair $(E, \phi)$ 
 consisting of a holomorphic vector bundle
 $E$ over a smooth projective curve
 $C$ with a fixed determinant $\det (E)=\wedge^{2} E=\mathcal{O}_C$, and  a traceless Higgs field 
 $\phi$. We use the same notations  in Definition \eqref{Higgs def} in Section \ref{vb}.

\begin{itemize}
\item $E, V$ denote holomorphic vector bundles of rank $r$ and degree $0$ with trivial determinant.
\item $\phi:E\stackrel{}{\rightarrow}E\otimes K_C$ is a traceless holomorphic Higgs field.

\item $\nabla:V\stackrel{}{\rightarrow}V\otimes K_C$ is an irreducible holomorphic connection. 

\setlength{\tabcolsep}{1pt}
\renewcommand{\arraystretch}{1.5}
\end{itemize}

Let $E^{top}:=E$ denote the underlying topological structure of the rank $r$ vector bundle $E$, obtained by forgetting its complex structure. Topological complex vector bundles over a compact topological surface are classified by their degrees and ranks, while the complete topological classification of complex vector bundles over a higher dimensional smooth topological manifold is given by their Chern classes. Since $E^{top}$ has rank $r$ and degree zero, it is topologically isomorphic to the direct sum $\mathcal{O}_{C}^{\dsum r}$ of $r$ copies of the trivial line bundle  $\cO_C$ over $C$.

As mentioned earlier, 
a classical result of Narasimhan-Seshadri \cite{NS}
shows that the moduli space $\cU_C(r,d)$ of stable 
holomorphic vector bundles of rank $r$ and
degree $d$ defined on 
a smooth projective algebraic curve $C$ is 
\emph{diffeomorphic} to the space of 
 projectively flat irreducible unitary  
connections
on $C$. A connection is said to be
\emph{projectively flat} if its curvature
takes values in the center of the Lie algebra of 
the structure group of the vector bundle.
For the case of degree $0$, 
there is a one-to-one correspondence
between stable holomorphic vector bundles
and  flat irreducible unitary connections. Through the Riemann-Hilbert
correspondence, these flat irreducible connections 
correspond to representations of 
the fundamental group $\pi_1(C)$ into
the unitary group modulo conjugation
\cite{AB,MFK}. The 
 equivalence classes of representations 
form a \emph{character variety}
$$\Hom^{\text{irr}}\left(\pi_1(C),U_n\right)\big/
U_n.
$$

The work of Hitchin \cite{H1}, Donaldson \cite{D}
and Simpson \cite{S} generalizes the above result
to the moduli theory of Higgs bundles, holomorphic 
connections, and complex character varieties. 
According to this 
generalization, a stable holomorphic Higgs bundle $(E, \phi)$ of degree $0$
corresponds to  $(D, \phi, h)$ consisting of the following data that satisfy \emph{Hitchin's equations}:

\begin{itemize}
\item  $h$ is a \emph{hermitian metric} on $E^{top}$. 
\item  $D$ is a \emph{unitary connection} on $E^{top}$ with respect to the hermitian metric
$h$. The connection $D$ decomposes into the holomorphic and antiholomorphic part
$$
D=D^{1,0}+D^{0,1}.
$$
In terms of a local coordinate $z$ of $C$, 
$D$  can locally be given by $D = d+A$ with the exterior  differential $d=\partial+\bar{\partial}$, where $\partial=\frac{\partial}{\partial z}\cdot dz$ and $\bar{\partial}=\frac{\partial}{\partial \bar{z}} \cdot d{\bar{z}}$, and  an $r \times r$ skew-hermitian matrix $A$ of $1$-forms on $C$.

\item $\phi:E^{top}\lrar E^{top}\tensor \Omega_C^1$ is
a  traceless $r\times r$ matrix of differentiable
$1$-forms on $C$.
\end{itemize}
We note that the Cauchy-Riemann
part $D^{0,1}$ of the connection $D$ induces a 
holomorphic structure in $E^{top}$, which we
denote simply by $E$. The unitarity condition
 means that the connection $D$ is
determined by $D^{0,1}$.

The great discovery of Donaldson \cite{D} is that
\emph{the stability condition for a Higgs bundle
$(E,\phi)$ is the system of non-linear PDEs that
Hitchin discovered through the reduction of
$4D$ Yang-Mills self-duality equations}. 
Denote by $F_D$  the curvature of the connection $D$, 
$$F_D=\half \cdot [D,D]=[D^{1,0}, D^{0,1}],
$$
and by  $\phi^{\dagger_h}$  the adjoint of $\phi$ with respect to the hermitian metric $h$.
The following system of  non-linear  PDEs is 
known as Hitchin's equations:
\be
\begin{cases}\label{hitchin eq}
 F_D + [\phi, \phi^{\dagger_h}] = 0 \\
 D^{0,1} \phi = 0.
\end{cases}
\ee
For our purpose, it is 
 important  that Hitchin's equations \eqref{hitchin eq} are equivalent to the flatness of the family of connections
\begin{equation}\label{flat}
D(\zeta):=\frac{1}{\zeta} \cdot \phi+D+\zeta \cdot \phi^{\dagger_h}
\end{equation}
for all $\zeta\in \mathbb{C}^{*}$. 
We can see this equivalence as follows. 
A straightforward calculation shows
\begin{align*}
[D(\zeta), D(\zeta)]&=\frac{1}{\zeta^2} \cdot [\phi, \phi]+\zeta^2\cdot[\phi^{\dagger_h}, \phi^{\dagger_h}]+2 \cdot (F_{D}+[\phi, \phi^{\dagger_h}])\\
&+\frac{1}{\zeta}\cdot
\left( [\phi, D]+[D, \phi]\right)+\zeta \cdot
\left( [\phi^{\dagger_h}, D]+ [D, \phi^{\dagger_h}]\right).
\end{align*}
Clearly \eqref{hitchin eq} implies the flatness of
$D(\zeta)$, because the second equation makes
$\phi$ holomorphic with respect to the 
complex structure of $C$ and the holomorphic
structure of $E$. Conversely, from the 
flatness of $D(\zeta)$, the first equation 
of  \eqref{hitchin eq} follows from the constant 
terms with respect to $\zeta$. 
From the $1/\zeta^2$ terms, we see that $\phi$ 
contains only $dz$ or $d\bar{z}$ term,
and from the $1/\zeta$ and $\zeta$ terms we see that 
either  $\phi$ or $\phi^{\dagger_h}$ is holomorphic. 
We rename the holomorphic one $\phi$ to obtain
\eqref{hitchin eq}.

 We thus have the following correspondences
$$
\fbox{A stable Higgs bundle  $(E, \phi) \longleftrightarrow  (D, \phi, h)$  satisfying  \eqref{hitchin eq} $\longleftrightarrow  [D(\zeta), D(\zeta)]=0$  of  \eqref{flat}.}
$$

To deal with three different appearances of 
complex moduli spaces in the Hitchin theory, 
we use the 
terminology \textit{gauge theoretical moduli space},
denoted by $\mathcal{M}_{\Gauge}$,  
to describe the differential geometric
moduli space of 
solutions  $(D, \phi, h)$ satisfying Hitchin's equations \eqref{hitchin eq}. It is a hyperK\"ahler manifold with  $\mathbb{P}^1$-worth  of complex structures. 
Customary, we assign the complex structure of
$\cM_{\Dol}$,
the moduli space of stable Higgs bundles, 
to the origin of $\bP^1$, and the algebraic
structure of $\cM_{\deR}$, the moduli space of
irreducible holomorphic connections, to 
$1\in \bP^1$. They are both \emph{diffeomorphic}
to $\cM_{\Gauge}$.

A particular diffeomorphism, known as
the \emph{nonabelian Hodge correspondence},
between $\cM_{Dol}$ and $\cM_{\deR}$ is
given as follows \cite{D, H1,S}.
Firstly, we assign the flat connection 
$D(\zeta)$ to $(E,\phi)\in \cM_{\Dol}$. 
Secondly, we define a new holomorphic
vector bundle $V = (E^{top},D(\zeta=1)^{0,1})$
by using the Cauchy-Riemann part of 
the flat connection
$D(\zeta)$ at $\zeta=1$.  
With respect to this complex structure, the 
$(1,0)$-part of the connection
$\nabla := D(\zeta=1)^{1,0}$ is automatically
a holomorphic connection 
in $V$, since $D(\zeta)$ is flat.
Thus we obtain $(V,\nabla)\in \cM_{\deR}$.
$$
\fbox{$\cM_{\Dol}\owns
(E, \phi)\stackrel{NAH}\longrightarrow \left(V, \nabla:=D(1)^{1,0}\right)\in \cM_{\deR}$.}
$$

This is a generalization of the classical results of Narasimhan-Seshadri to Higgs bundles. 
A character variety also comes in to the picture,
as the \emph{Betti moduli space}
$$
\cM_{\Betti} := \Hom^{\text{irr}}\left(\pi_1(C),
SL_r(\bC)\right)\sslash SL_r(\bC).
$$
The classical unitary group is now replaced by 
a complex Lie group $G=SL_r(\bC)$. 
The complex structure of $\cM_{\Betti}$ 
comes from that of the group $SL_r(\bC)$.
The Riemann-Hilbert correspondence gives
a highly transcendental biholomorphic map
between $\cM_{\deR}$ and $\cM_{\Betti}$.
We thus have 

\medskip
\begin{tabular}{r l}
$\mathcal{M}_{\text{Dol}}$ & $=$ \text{ moduli space of stable holomorphic Higgs bundles $(E,\phi)$ on $C$
of rank $r$}\\
{$\rotatebox[origin=c]{90}{$\sim$}$} & {[Diffeomorphic NAH, Donaldson-Hitchin-Simpson]}\\
$\mathcal{M}_{\text{deR}}$ & $=$ \text{ moduli space of rank r irreducible connections  $(V,\nabla)$ on $C$ }\\
{$\rotatebox[origin=c]{90}{$\cong$}$}	 & {[Biholomorphic Riemann-Hilbert]}\\
$\mathcal{M}_{\Betti}$ & $=\; \Hom^{\text{irr}}\left(\pi_1(C), SL_r(\bC)\right)\sslash SL_r(\bC)$.
\end{tabular} 
\medskip

\subsection{Hitchin section for
$SL_r(\bC)$-Higgs bundles (principal $sl_2(\mathbb{C})$)}
\label{hitchin section}
We fix a  spin structure 
$K_C^{\half}$ on $C$ 
given by 
transition functions 
$\{\xi_{\a\b}\}$. 
To define
a Hitchin section of $G$-Higgs bundles
for a simple complex Lie group $G$, we
need the notion of Konstant's 
\emph{principal three-dimensional subgroup} (TDS)
of \cite{Kostant}.
For the case of $G=SL_r(\bC)$, it  simply comes from
the unique $r$-dimensional irreducible representation
of $SL_2(\bC)$. The Lie algebra of principal TDS
is the linear span  $\la X_+,X_-,H\ra$, 
where
\begin{itemize}
\item
$X_+:=\begin{bmatrix}
0&\sqrt{p_1}& 0 &\cdots &0\\
0&      0   & \sqrt{p_2} &\cdots &0\\
\vdots&\vdots&\vdots&\ddots&\vdots\\
0&      0   & 0           &\cdots  &\sqrt{p_{r-1}}\\
0&      0   & 0           & \cdots & 0
\end{bmatrix}$, \qquad $p_i:= i(r-i)$,
\item
$X_{-}:=X_{+}^{t},$ 
\item
$H:= [X_+,X_-]=\begin{bmatrix}
r-1&    0   &\cdots &   0 & 0\\
0&      r-3  &\cdots &  0 & 0\\
\vdots&\vdots&\ddots&\vdots&\vdots\\
0&      0              &\cdots  &-(r-3) & 0\\
0&      0             & \cdots & 0     & -(r-1)
\end{bmatrix} .$
\end{itemize}
 Define the split vector bundle $E_0:=K_C^{\frac{r-1}{2}}\oplus K_C^{\frac{r-1}{2}-1}\oplus \ldots \oplus K_C^{-\frac{r-1}{2}}$, whose
  transition function 
  is given by
   $\{\xi_{\a\b}^H=\exp(H\cdot \log\xi_{\a\b})\}$.
We note that every $q_i \in H^0\!\left(C, K_C^{i+1}\right)$
satisfies that  {$q_i|_{U_{\alpha}}=q_i|_{U_{\beta}}\cdot \xi_{\alpha \beta}^{2(i+1)}$.

Now we can generalize Definition \ref{section} of Section \ref{HC rank 2} as follows.

\begin{Def}
The \textbf{Hitchin section} is a 
holomorphic Lagrangian inside $\mathcal{M}_{\Dol}$
consisting of stable Higgs pairs 
$\left(E_0,\phi(q)\right)$ for every 
 $q=(q_1,\ldots, q_{r-1}) \in B = 
 \bigoplus_{i=1}^{r-1}H^0\!\left(C, K_C^{i+1}\right)
 $, where
 $$\phi(q):=X_{-}+\sum_{i=1}^{r-1}q_i \cdot X_{+}^i.$$
\end{Def}

\begin{figure}[htb]
\includegraphics[width=2.5in]{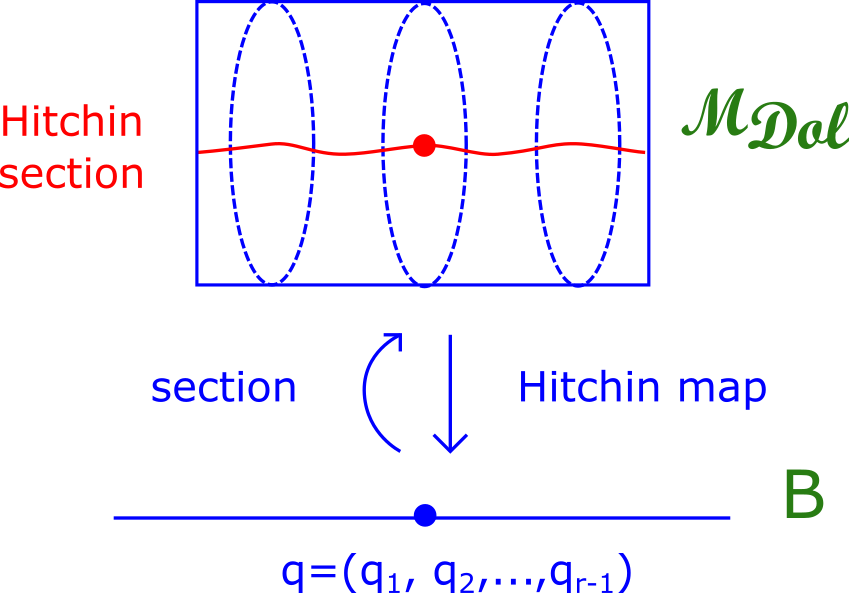}
\end{figure}

\subsection{On a conjecture of Gaiotto}\label{section 6}

In 2014, Gaiotto \cite{G} conjectured the 
following.
\begin{conj}\label{dgaiotto}
Let $(E_0,\phi)$ be a stable Higgs pair 
on a Hitchin section
in $\cM_{\Dol}$,
and $D(\zeta)$, $\zeta\in \bC^*$, the corresponding
one-parameter family of flat connections. Define 
a \textbf{two-parameter} family of connections
by 
\be
\label{two}
D(\zeta, R) := \zeta^{-1} \cdot R \cdot \phi + D + \zeta \cdot R \cdot \phi^{\dagger},
\ee
where $R\in \bR_+$ is a positive real number. 
Then the scaling limit
$$
\lim_{\substack{R\rar 0,\;\zeta\rar 0\\
\zeta/R = \hbar}} D(\zeta,R)
$$
exists, and defines an $SL_r(\bC)$-oper for
every $\hbar\in \bC^*$. 
\end{conj}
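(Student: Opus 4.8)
The plan is to realize the scaling limit explicitly on the Hitchin section, where the harmonic metric is diagonal, and then match the limit with the $\hbar$-connection $\nabla^\hbar = d+\frac{1}{\hbar}\phi$ of Lemma \ref{gun2}, living on the Gunning bundle $V=V_\hbar$ of Theorem \ref{gun1}. The first point to make precise is that the metric in \eqref{two} is itself $R$-dependent: for each fixed $R>0$ the rescaled pair $(E_0,R\phi)$ is again a stable Hitchin-section Higgs bundle, so by the nonabelian Hodge correspondence it carries a unique harmonic metric $h_R$ solving \eqref{hitchin eq}, and $D(\zeta,R)$ is exactly the flat family \eqref{flat} of $(E_0,R\phi)$. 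Writing $D_R$ for the Chern connection of $h_R$ and imposing $\zeta/R=\hbar$, the identity $\zeta^{-1}R=\hbar^{-1}$ gives
$$
D(\zeta,R) = \frac{1}{\hbar}\,\phi + D_R + \hbar R^2\,\phi^{\dagger_{h_R}},
$$
so the leading term is already independent of $R$, and the whole problem is to control $D_R$ and the nilpotent-valued term $\hbar R^2\phi^{\dagger_{h_R}}$ as $R\to 0$.

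The structural device I would use is the constant gauge transformation $g_R := \exp\bigl(\tfrac{1}{2}(\log R)\,H\bigr)\in\Aut(E_0)$ built from the semisimple element $H$ of the principal TDS. A short computation from $[H,X_-]=-2X_-$ and $[H,X_+]=2X_+$ shows $g_R(R\phi)g_R^{-1}=\phi(q')$ with $q'_i=R^{\,i+1}q_i$, so the rescaled pair is holomorphically gauge-equivalent to a Hitchin-section pair whose base point $q'$ tends to $0$ as $R\to0$. Since $g_R$ is holomorphic and the harmonic metric is unique, $h_R=g_R^{\ast}\widetilde h_{q'}$, where $\widetilde h_{q'}$ is the (diagonal) harmonic metric of $\phi(q')$; this reduces the scaling limit to the behaviour of $\widetilde h_{q'}$ near the uniformizing point $\phi(0)=X_-$, whose harmonic metric $h_0$ is the explicit metric of hyperbolic uniformization.

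I would then feed the convergence $\widetilde h_{q'}\to h_0$ into the two terms. Because the metrics are diagonal and $g_R$ is constant, the connection matrix of $D_R$ coincides with that of the Chern connection of $\widetilde h_{q'}$ and converges to the uniformizing connection $D_0$; meanwhile the powers of $R$ produced by conjugating $\phi^{\dagger}$ through $g_R$ cancel the prefactor $R^2$, so that $\hbar R^2\phi^{\dagger_{h_R}}$ converges to a nonzero nilpotent $(0,1)$-form (in rank two, $\hbar\,a^{-2}X_+\,d\bar z$) while the complementary entry, carrying a surviving factor $R^{4}\bar q$, dies. Consequently $\lim D(\zeta,R)$ exists, is flat by continuity of flatness from the finite-$R$ family, and its $(0,1)$-part is precisely the non-split extension defining $V=V_\hbar$ in Theorem \ref{gun1}; passing to a holomorphic frame of $V$, the $(1,0)$-part is the holomorphic connection $d+\frac{1}{\hbar}\phi$ of Lemma \ref{gun2}, whose lower-triangular entry $\tfrac{1}{\hbar}X_-$ induces an isomorphism on the associated graded of the oper filtration $0\subset K_C^{\half}\subset V$, exhibiting the limit as an $SL_r(\bC)$-oper. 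The rank-$r$ case runs along the same lines, the diagonal Hitchin-section metric now solving the affine Toda system attached to the principal $sl_2$.

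The hard part will be the analytic input of the third paragraph: establishing $\widetilde h_{q'}\to h_0$ with enough regularity (at least $C^1$, uniformly in the relevant range) to justify passing to the limit inside $D_R$ and $\phi^{\dagger_{h_R}}$. On the Hitchin section this convergence reduces to continuous, indeed real-analytic, dependence of the solution of a scalar elliptic (Toda-type) equation on the parameter $q'$ at the stable point $q'=0$, which I would obtain from a priori estimates and the implicit function theorem. The delicate feature one must respect is that the harmonic metric $h_R$ genuinely degenerates as $R\to0$ — the limiting Higgs bundle $(E_0,0)$ being unstable — so that it is only the specific $g_R$-renormalized combination above, and not $h_R$ itself, that has a finite limit; this very degeneration is what forces the jump of the holomorphic structure from the split bundle $E_0$ to the Gunning bundle $V$, and keeping track of it correctly is the crux of the argument.
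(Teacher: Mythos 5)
Your proposal is correct and follows the same overall strategy as the paper's proof: reduce to the diagonal harmonic metric on the Hitchin section, identify the flatness of $D(\zeta,R)$ with the scalar (Toda-type) harmonicity equation, solve it explicitly at $q=0$ by the hyperbolic metric of the uniformization, perturb to get the $\cO(R^4)$ corrections, pass to the limit, and finally gauge away the residual $(0,1)$-term to land on $d+\frac{1}{\hbar}\phi(q)$ acting in the Gunning bundle $V_\hbar$ of Theorem~\ref{gun1}. The one genuine difference is how you extract the $R\to 0$ asymptotics of the harmonic metric: you conjugate by $g_R=\exp\bigl(\tfrac12(\log R)H\bigr)$, which identifies $(E_0,R\phi(q))$ with the Hitchin-section point $\phi(q')$, $q_i'=R^{i+1}q_i$, so that the scaling limit becomes continuity (real-analyticity) of the harmonic metric at the Fuchsian point $q'=0$; the paper instead makes the conformal-factor ansatz $\lambda=\frac{\lambda_\natural}{R}\,e^{f(R)}$ and applies the implicit function theorem directly to the scalar equation \eqref{harmonic}, obtaining $f(R)=f_4R^4+\cdots$. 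The two devices encode the same analytic content (in rank two, $q'=R^2q$ and the first correction to the metric is quadratic in $q'$, hence $\cO(R^4)$), but your $g_R$-formulation explains more conceptually \emph{why} only even powers of $R$ appear and why the surviving $(0,1)$-entry is exactly $\hbar\lambda_\natural^2\,d\bar z\cdot X_+$ while the $X_-$-entry carries $R^4\bar q$ and dies; the paper's formulation keeps everything in one scalar PDE and makes the explicit limiting connection \eqref{general} immediate. You also correctly flag the crux — that $h_R$ itself degenerates (the limit Higgs bundle $(E_0,0)$ is on the nilpotent cone) and only the renormalized combination converges, forcing the jump of holomorphic structure from $E_0$ to the nonsplit extension $V_\hbar$ — which is precisely the content of the paper's Step 7 together with the gauge transformation $g=e^{\hbar\,\partial\log\lambda_\natural\cdot X_+}$ of Section~7.
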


The data $(D,\phi,h)$
corresponding to any point $(E_{0}, \phi(q))$
on the Hitchin section  satisfies Hitchin's equations \eqref{hitchin eq}. Scaling the Higgs field
 $\phi(q)$ by any real parameter $R\in \mathbb{R}_{+}$ does not affect the 
 stability condition. 
 Therefore, the scaled 
 data $(D,R\phi,h)$ corresponding
 to the pair  
 $(E_{0}, R\phi(q))\in \mathcal{M}_{\Dol}$ 
 also satisfies  Hitchin's equations. This time,
 the equation is $R$-twisted:
 \be\label{R twisted}
\begin{cases}
F_D + R^2 \cdot [\phi, \phi^{\dagger_h}] = 0\\
D^{0,1} \phi= 0.
\end{cases}
\ee
By the same argument as before,
the $R$-twisted Hitchin equations are
equivalent to the flatness of the two-parameter
family of connections $D(\zeta,R)$. 

A surprising fact of this conjecture is that
the scaling limit of the differential geometric
object $D(\zeta,R)$ is automatically an $\hbar$-family
of 
holomorphic connections defined on an algebraic
$\hbar$-deformation family of filtered
vector bungles, generalizing the extension 
$$0\rightarrow K_C^{\frac{1}{2}}\rightarrow V_\hbar \rightarrow K_C^{-\frac{1}{2}}\rightarrow 0.$$
Na\"ively, it looks that the scaling limit
of \eqref{two} is simply
$D+\frac{1}{\hbar}\phi$. There is a problem here,
because the hermitian metric $h$ that solves
\eqref{R twisted} explodes as $R\rar 0$. 
Since the $h$-unitary connection $D$ 
also depends on $h$, the limit of $D$ does not
make sense as $R$ tends to $0$.

\begin{thm}[Dumitrescu, Fredrickson, Kydonakis, Mazzeo, Mulase, Neitzke, \cite{OD20}]\label{DFKMMN} 
Conjecture~\ref{dgaiotto} holds 
for an arbitrary simple and simply connected complex Lie group $G$.
\end{thm}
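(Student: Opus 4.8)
The plan is to translate the statement, via the nonabelian Hodge correspondence recalled above, into a question about the asymptotics of harmonic metrics as $R \to 0$, and then to cancel the divergence of the metric by an explicit $R$-dependent gauge transformation. First I would note that scaling $\phi(q)$ by $R \in \bR_+$ preserves stability, so for each fixed $R>0$ the Hitchin--Simpson existence theorem provides a harmonic metric $h_R$ solving the $R$-twisted equations \eqref{R twisted}, and consequently the two-parameter family $D(\zeta,R)$ of \eqref{two} is flat for all $\zeta \in \mathbb{C}^*$. Imposing $\zeta = \hbar R$ rewrites this family as $\hbar^{-1}\phi + D_R + \hbar R^2 \phi^{\dagger_{h_R}}$, where $D_R$ is the Chern connection of $h_R$; the formal $R\to 0$ limit is $D_R + \hbar^{-1}\phi$, and the entire content of the theorem is that, after a suitable gauge, this limit exists and is an oper even though $h_R$ and hence $D_R$ individually blow up.

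Next I would use Kostant's principal $sl_2(\mathbb{C})$-structure. The grading of $E_0 = \bigoplus_i K_C^{\frac{r-1}{2}-(i-1)}$ defined by $H$ gives a holomorphic filtration which $X_-$ lowers by one step and which the raising terms $q_i X_+^i$ move upward. To leading order as $R\to 0$ the metric $h_R$ is approximated by the explicit \emph{semiflat} metric attached to the spectral data, which is diagonal with respect to this grading; this is the source of its divergence. I would therefore introduce a diagonal gauge transformation $g_R$, built from the grading together with an appropriate power of $R$ (the conformal-limit dilation), designed precisely to absorb this leading divergence, and then study the conjugated connection $g_R \cdot D(\hbar R, R) \cdot g_R^{-1}$.

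The main obstacle is to prove that $g_R \cdot D(\hbar R, R) \cdot g_R^{-1}$ actually converges as $R\to 0$. Away from the branch locus of the spectral curve $\Sigma$ --- the zeros of the discriminant of $\det(\eta\cdot I - \phi(q))$ --- the harmonic metric converges to the semiflat model, and the conjugated connection converges in a fixed holomorphic frame to $d + \hbar^{-1}(X_- + \sum_i q_i X_+^i)$, while the adjoint term $\hbar R^2 \phi^{\dagger_{h_R}}$ is killed by the powers of $R$. The delicate point, which I expect to be the heart of the argument, is uniform control near the branch points, where the semiflat metric is singular and the naive limit genuinely fails: there one must match the semiflat model to the local \emph{fiducial} solutions of the decoupled Hitchin equation near a simple zero and bound the error uniformly in $R$, so as to conclude that the limit extends holomorphically across the branch locus. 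This local analysis near the branch points, in the spirit of the limiting-configuration theory, is where the real difficulty lies.

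Finally I would identify the limit with an oper. The gauge $g_R$ carries $E_0$ to the filtered $\hbar$-deformed bundle generalizing the Gunning extension of Theorem \ref{gun1}, so that the limiting connection preserves the induced complete flag, shifts it by exactly one step (Griffiths transversality), and has off-diagonal blocks coming from $X_-$ that furnish the required isomorphisms on the successive quotients twisted by $K_C$. This verifies the oper condition for every $\hbar\in\mathbb{C}^*$ and reduces, for $r=2$, to the explicit family of Deligne $\hbar$-connections of Section \ref{Deligne}. Simple-connectedness of $G$ enters to guarantee that the spin structure and Kostant's principal embedding lift consistently, making the Hitchin section and the filtration globally well defined; with this in place the argument applies to an arbitrary simple, simply connected complex $G$.
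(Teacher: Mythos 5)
There is a genuine gap, and it lies exactly where you locate ``the heart of the argument.'' Your framing of the problem is right (stability is preserved under scaling, $h_R$ solves the $R$-twisted equations \eqref{R twisted}, $D(\zeta,R)$ is flat, set $\zeta=\hbar R$, use Kostant's principal TDS, and finish by checking the filtration conditions of Definition~\ref{SLr oper}), but the asymptotic analysis you propose belongs to the wrong limit. The semiflat approximation, the limiting-configuration picture, and the matching to fiducial solutions near the zeros of the discriminant all describe the $R\to\infty$ regime, where the Higgs field dominates and the harmonic metric degenerates onto data determined by the spectral curve. In the conformal limit $R\to 0$ the behavior is entirely different: the harmonic metric does \emph{not} converge to the semiflat model, the branch points of $\Sigma$ play no role, and no local desingularization is needed. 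The mechanism the paper uses is a global, regular perturbation argument: on the Hitchin section the metric is determined by a single conformal factor $\lambda$, the flatness of $D(\zeta,R)$ reduces to the scalar harmonicity equation \eqref{harmonic}, and writing $\lambda=\frac{\lambda_\natural}{R}e^{f(R)}$ (with $\lambda_\natural$ the hyperbolic metric of curvature $-4$ coming from uniformization) turns \eqref{harmonic} into an equation whose $R$-dependence enters only through $R^4$ and whose linearization at $f=0$, $R=0$ is invertible. The implicit function theorem then gives $f(R)=f_4R^4+\cdots$ uniformly on all of $C$, which is precisely what kills the $\overline{q}\lambda^{-2}$ entry of $\zeta R\,\phi^{\dagger_h}$ and makes the limit exist. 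Your proposal contains no substitute for this step, and the step you do propose (uniform control near branch points via fiducial solutions) would not produce the needed $O(R^4)$ decay because it addresses a different degeneration.

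A secondary point: no $R$-dependent diagonal gauge transformation is required to cancel divergences. The Chern connection depends on $\lambda$ only through $\partial\log\lambda$, so the overall factor $1/R$ drops out automatically, and the limit of $D(\hbar R,R)$ exists in the given frame; the only gauge transformation in the paper is the subsequent \emph{unipotent} one $g=e^{\hbar\,\partial\log\lambda_\natural\cdot X_+}$, used to put the already-existing limit \eqref{general} into the standard oper form $d+\frac{1}{\hbar}\phi(q)$ of \eqref{nabla}. Your concluding identification of the limit with an oper via the Gunning-type filtration is consistent with the paper, but it rests on the unproved convergence statement above.
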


\subsection{Sketch of the proof in rank two} We present here the main steps to prove Theorem~\ref{DFKMMN}
for the case of $G=SL_2(\bC)$. We use the basis 
$$
\left<
X_+=\begin{bmatrix}0&1\\0&0
\end{bmatrix},
X_-=\begin{bmatrix}0&0\\1&0
\end{bmatrix},
H=\begin{bmatrix}1&0\\0&-1
\end{bmatrix}
\right>
$$
for $sl_2(\bC)$ to simplify our calculations. 
Their commutation relations are
$$
[X_+,X_-] = H,\qquad [H,X_\pm] = \pm 2X_\pm.
$$

\begin{step}
We first notice that a hermitian metric on 
the canonical bundle $K_C$ naturally comes
from a hermitian metric on the curve $C$ itself. 
Since we start from a Higgs pair on a Hitchin section with vector bundle $E_0=K_C^{\frac{1}{2}}\oplus K_C^{-\frac{1}{2}}$, a fiber metric is determined by a hermitian metric on $C$.  Recall that
$$
\fbox{A Higgs bundle $(E_0, \phi)$ on the Hitchin section   $\lrar   (D, \phi, h)$  satisfying  \eqref{hitchin eq}.}
$$
A choice of a hermitian metric 
on $C$ determines the fiber metric
$h$, and hence the unitary connection $D$. Thus
we wish to see how  it translates into
Hitchin's equations. 
\end{step}

\begin{step}
 We start from a complex structure on $C$
 with a holomorphic local parameter $z$, together
 with a  hermitian metric 
 $$g=\lambda^2 \cdot dz \cdot d\bar{z}$$
 on $C$, where
 $\lam$ is a positive
 real function depending on $R$.  The hermitian
 metric on $C$ is the same as the fiber metric of 
 the tangent bundle of $C$, which is $K_C^{-1}$. 
 Hence $\lam$ naturally gives  a fiber metric
 of $K_C^{-\half}$. Therefore, 
 the hermitian metric on the split vector bundle $E_0=K_C^{\frac{1}{2}}\oplus K_C^{-\frac{1}{2}}$ is given by the matrix
$h=\begin{bmatrix}
\lambda^{-1} &0\\
0&\lambda.
\end{bmatrix}$.
 Then $D$ becomes the Chern connection
 $D = D^{1,0}+D^{0,1}$, where
$
D^{0,1}=\bar{\partial}$ and
$ D^{1,0}=\partial+h^{-1}\cdot \partial h.
$
In terms of $\lam$, we have
$$
D=d+h^{-1}\cdot \partial h=d-\partial \log \lambda\cdot \begin{bmatrix}
1 &0\\
0&-1
\end{bmatrix} = d-\partial \log\lam\cdot H.
$$
\end{step}

\begin{step}
Let us introduce a Higgs field 
$$
\phi=\phi(q)=\begin{bmatrix}
0 & q\\
1& 0
\end{bmatrix}\cdot dz
=(X_-+q\cdot X_+) \cdot dz
$$ 
so that $(E_0,\phi(q))$ is on the Hitchin section,
where $q\in H^0\left(C,K_C^2\right)$.  The
hermitian conjugate of the Higgs field is 
calculated by 
$$
\phi^{\dagger_h}=h^{-1}\cdot \overline{\phi^{t}}\cdot h=
\begin{bmatrix}
\lambda &0\\
0&\lambda^{-1}
\end{bmatrix}\cdot
\overline{\begin{bmatrix}
0 & 1\\
q & 0
\end{bmatrix}\cdot dz} \cdot
\begin{bmatrix}
\lambda^{-1} &0\\
0&\lambda
\end{bmatrix}=
\begin{bmatrix}
0 & \lambda^2\\
\lambda^{-2} \cdot \overline{q}&0
\end{bmatrix} \cdot d\bar{z}
=\left(\lam^{-2}\cdot \bar{q}\cdot X_-+\lam^2 \cdot X_+\right)\cdot d\bar{z}.
$$
\end{step}

\begin{step}
Since we have identified all the ingredients, 
we can now write the
two-parameter family of 
connections in this local coordinate as
\begin{align*}
D(\zeta, R)&=\frac{R}{\zeta}\cdot \begin{bmatrix}
0 & q\\
1& 0
\end{bmatrix}\cdot dz+d-\partial \log \lambda\cdot \begin{bmatrix}
1 &0\\
0&-1
\end{bmatrix}\cdot dz+R \cdot \zeta \cdot \begin{bmatrix}
0 & \lambda^2\\
\overline{q}\cdot \lambda^{-2}& 0
\end{bmatrix}\cdot d\bar{z}
\\
&= d +\frac{R}{\zeta}\cdot dz \cdot (X_-+q \cdot X_+)
-\partial \log\lam\cdot  dz \cdot H+ R \cdot \zeta \cdot
d\bar{z} \cdot \left(\bar{q}\cdot\lam^{-2} \cdot
X_-+\lam^2 \cdot X_+\right).
\end{align*}
\end{step}

\begin{step}
A simple calculation shows that 
the coefficient of $H$ in the flatness condition 
$$
\left[D(\zeta, R), D(\zeta, R) \right]=0
$$ 
of $D(\zeta, R)$ yields
\begin{align*}
0 &= [d, -\partial \log\lam\cdot  dz \cdot H]
+R^2 \cdot dz\wedge d\bar{z}\cdot\left(\lam^2\cdot [X_-,X_+]
+q\cdot \bar{q}\cdot \lam^{-2}\cdot [X_+,X_-]\right)
\\
&=
-d\partial \log\lam\cdot dz \cdot H
+R^2 \cdot dz\wedge d\bar{z}\cdot \left(-\lam^2\cdot H
+q \cdot \bar{q} \cdot \lam^{-2} \cdot H\right)
\\
&=
\left(\bar{\partial}\partial\log \lam +R^2 \cdot (\lambda^{-2}\cdot q \cdot \overline{q} - \lambda^2)\right) \cdot dz\wedge d\bar{z}.
\end{align*}
Therefore, we obtain 
\be\label{harmonic}
\bar{\partial}\partial\log \lam +R^2 \cdot (\lambda^{-2}\cdot q \cdot \overline{q} - \lambda^2)= 0.
\ee
We thus conclude that the flatness condition for the two-parameter family of connections  $D(\zeta, R)$ gives the harmonicity condition 
\eqref{harmonic} for the hermitian metric $\lambda$.
\end{step}

\begin{step} For $q=0$, i.e., $\phi=X_{-}$,  the harmonicity equation \eqref{harmonic} becomes
\be\label{constant}
\bar{\partial}\partial\log \lam -R^2 \cdot \lambda^2 = 0,
\ee
which can be solved explicitly. We obtain
\be\label{lam 0}
\lam_0 = \frac{1}{R}\cdot \frac{i}{z-\bar{z}}.
\ee
Let us denote by 
\be\label{natural}
\lambda_\natural = \frac{i}{z-\bar{z}} = \frac{1}{2 \cdot y},
\ee
where $z= x+iy$.
The corresponding hermitian metric is then 
$$
g_\natural = \frac{dz\cdot d\bar{z}}{4\cdot y^2},
$$
whose Gaussian curvature is 
\be\label{curvature}
K := -\frac{4}{\lam_\natural^2} \cdot
\partial\bar{\partial}\log \lam_{\natural} = -4.
\ee
Indeed, $g_\natural$ is the globally defined
 constant curvature
metric on the upper half plane $\bH$, which is
invariant under the action of $PSL_2(\bR) = \Aut(\bH)$. 
Since we are dealing with a Riemann surface $C$
of genus $g\ge 2$, its universal covering is
$\bH$, and we have a non-canonical isomorphism
$C \isom \bH/\pi_1(C)$,
where $\pi_1(C)$ acts on $\bH$ through a 
representation $\rho:\pi_1(C)\lrar SL_2(\bR)$. 
By inducing a metric by the push-forward 
of the covering map $\bH\lrar C$, 
we conclude that the harmonicity equation
\eqref{constant} can be solved globally on $C$
with the hyperbolic metric on $C$ of
constant curvature $-4R^2$. 

Since $\frac{\zeta}{R}=\hbar$, we obtain
\be\label{q=0}
\begin{aligned}
D(\zeta, R)&=d+
\frac{1}{\hbar}\cdot \begin{bmatrix}
0 & 0\\
1& 0
\end{bmatrix}\cdot dz
-\partial \log \lambda_{\natural}\cdot \begin{bmatrix}
1 &0\\
0&-1
\end{bmatrix} dz+\hbar\cdot  \begin{bmatrix}
0 & \lambda_{\natural}^2\\
0& 0
\end{bmatrix} d\bar{z}
\\
&=
d+\frac{1}{\hbar}\cdot dz \cdot X_1-\partial \log \lambda_{\natural}\cdot dz \cdot H+
\hbar \cdot \lam_\natural^2 \cdot d\bar{z}\cdot X_+,
\end{aligned}
\ee
which \emph{does not} depend on $R$. 
\end{step}

\begin{step}
 The case when $q\neq 0$ in general.
 We remark that any hermitian metric compatible with the complex structure  of  $C$ is conformal to the 
 constant curvature metric $g_\natural$. 
 Therefore, we can write 
$$
\lambda=\lambda_{0}\cdot e^{f(R)}=\frac{\lambda_\natural}{R}\cdot e^{f(R)}
$$
with a conformal factor $e^{f(R)}$ depending
on a real
valued function $f(R)$ on $C$. We plug this
expression into \eqref{harmonic} and apply
the 
 implicit function theorem to yield that $f$ is real analytic, and more significantly, that  $f(R)=f_4\cdot R^4+$ \textit{higher order terms}. This implies
$$
\lambda^{-2}=\frac{R^2}{\lambda_\natural^2}\cdot e^{-2f(R)}=\frac{R^2}{\lambda_\natural^2}+\mathcal{O}(R^6),
\quad  \lambda=\frac{\lambda_\natural}{R}+\mathcal{O}(R^3), \quad 
\partial \log \lambda=\partial \log \lambda_\natural+ \cO(R^4).
$$
Therefore, we obtain the scaling limit as
\be\label{general}
\begin{aligned}
D(\zeta,R)&=d+\frac{1}{\hbar}\cdot \begin{bmatrix}
0 & q\\
1& 0
\end{bmatrix}\cdot dz-\partial \log \lambda\cdot \begin{bmatrix}
1 &0\\
0&-1
\end{bmatrix}\cdot dz+R^2 \cdot\hbar \begin{bmatrix}
0 & \lambda^2\\
\overline{q}\cdot \lambda^{-2}& 0
\end{bmatrix}\cdot d\bar{z}\\
&=
d+ \frac{1}{\hbar}\cdot \begin{bmatrix}
0 & q\\
1& 0
\end{bmatrix}\cdot dz-\partial \log \lambda_{\natural}\cdot \begin{bmatrix}
1 &0\\
0&-1
\end{bmatrix}\cdot dz
+
\cO(R^4)\cdot \begin{bmatrix}
1 &0\\
0&-1
\end{bmatrix}\cdot dz
\\
&+\hbar \cdot \begin{bmatrix}
0 & \lambda_\natural^2+\mathcal O(R^4)\\
\overline{q}\cdot\frac{R^4}{\lambda_\natural^2}+
 \mathcal{O}(R^8) & 0
\end{bmatrix}\cdot d\bar{z}
\\
&\frac{R\rar 0, \zeta\rar 0}{\zeta/R = \hbar}\!\!\!\rar
d+\frac{1}{\hbar}\cdot \begin{bmatrix}
0 & q\\
1& 0
\end{bmatrix}\cdot dz-\partial \log \lambda_{\natural}\cdot \begin{bmatrix}
1 &0\\
0&-1
\end{bmatrix} \cdot dz+\hbar \cdot \begin{bmatrix}
0 & \lambda_{\natural}^2\\
0& 0
\end{bmatrix} \cdot d\bar{z}
\\
&=
d+\frac{1}{\hbar}\cdot dz \cdot (X_-+q \cdot X_+) -
\partial \log \lambda_{\natural}\cdot dz \cdot H+
\hbar \cdot \lam_\natural^2 \cdot d\bar{z} \cdot X_+.
\end{aligned}
\ee
We can see that the only dependence on 
the quadratic differential $q$ is
in the form of $\phi(q)$.
\end{step}


\section{The limit oper of Gaiotto's correspondence and the quantum curve}
In this section we will prove that the 
scaling limit $\lim_{R, \zeta\rightarrow 0, \frac{\zeta}{R}=\hbar}D(\zeta, R)$ is an oper.

A surprising fact is  that the limit oper of Theorem \ref{DFKMMN} \cite{OD20} is gauge equivalent to $\nabla^{\hbar}=d+\frac{1}{\hbar} \cdot \phi$ in 
the M\"obius coordinate system obtained by 
the uniformization of the base curve $C$. In other words, the limit oper of Gaiotto correspondence in rank two  is the quantum curve of Theorem~\ref{quantum} (\cite{OD8}). We use
Theorem~\ref{gun1} and Lemma~\ref{gun2} of Section~\ref{Deligne} to imply that the scaling limit of Gaiotto's correspondence is actually an oper. For the sake of completeness, we include detailed computations here.

\begin{prop}[Gauge transform of the scaling Limit]
The limit expression of 
\eqref{general},
$$
D(\hbar):=d+\frac{1}{\hbar}\cdot dz \cdot (X_-+q \cdot X_+) -
\partial \log \lambda_{\natural}\cdot dz \cdot H+
\hbar \cdot\lam_\natural^2 \cdot d\bar{z} \cdot X_+,
$$
is gauge equivalent to an oper
\be \label{nabla}
\nabla^{\hbar}:=d+\frac{1}{\hbar}\cdot \begin{bmatrix}
0 & q\\
1& 0
\end{bmatrix}\cdot dz = d+\frac{1}{\hbar}\cdot \phi(q).
\ee
Here, $\lambda_{\natural}=\frac{i}{z-\bar{z}}$
is the local expression of the hermitian metric of 
constant curvature $-4$
 on $C$. If we introduce a M\"obius 
 coordinate system on $C$ induced by the
 uniformization covering $\bH\lrar C$, then
 \eqref{nabla} determines a globally defined
 oper on $C$. 
\end{prop}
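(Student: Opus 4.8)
The plan is to exhibit an explicit local gauge transformation $g$, to verify by a direct computation that $g\,D(\hbar)\,g^{-1}=\nabla^{\hbar}$, and then to use Theorem~\ref{gun1} together with Lemma~\ref{gun2} to upgrade this local identity to a global statement. Recall from \eqref{gauge} that a change of frame by $g$ sends $d+A$ to $d+g\,A\,g^{-1}-(dg)\,g^{-1}$. The only term of $D(\hbar)$ that fails to be holomorphic is the antiholomorphic piece $\hbar\cdot\lambda_\natural^2\cdot d\bar z\cdot X_+$, and it points in the $X_+$ direction; this motivates searching among unipotent upper-triangular gauges
$$
g=I+b\cdot X_+=\begin{bmatrix}1&b\\0&1\end{bmatrix},
$$
for a single scalar function $b$, for which one has the convenient identities $g\cdot X_+=X_+$ and $g^{-1}=I-b\cdot X_+$.

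First I would write the gauge equation as $dg+(\nabla^{\hbar}-d)\,g-g\,(D(\hbar)-d)=0$ and split it into its $(0,1)$ and $(1,0)$ parts. The $(0,1)$ part collapses to $\partial_{\bar z}b=\hbar\cdot\lambda_\natural^2$; substituting the explicit hyperbolic metric $\lambda_\natural=i/(z-\bar z)$ solves it by $b=-\hbar/(z-\bar z)=\hbar\cdot\partial_z\log\lambda_\natural$. For the $(1,0)$ part I would use the commutator $[X_-+q\cdot X_+,X_+]=-H$ and the relation $X_+\cdot H=-X_+$ to separate the equation into a coefficient of $H$ and a coefficient of $X_+$. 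The $H$-coefficient vanishes exactly when $b=\hbar\cdot\partial_z\log\lambda_\natural$, which is precisely the choice that \emph{cancels} the diagonal term $-\partial\log\lambda_\natural\cdot dz\cdot H$; the $X_+$-coefficient reduces to $\partial_z b=(\partial_z\log\lambda_\natural)\cdot b$, an identity satisfied by the same $b$. Thus $g\,D(\hbar)\,g^{-1}=\nabla^{\hbar}$ on each coordinate chart, and since the Higgs datum $\phi(q)$ is carried over untouched, the resulting oper depends on $q$ only through $\phi(q)$.

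The main obstacle is the globalization, because the gauge $g=I+\hbar\cdot(\partial_z\log\lambda_\natural)\cdot X_+$ is manufactured from the \emph{local} expression of the hyperbolic metric, and $\partial_z\log\lambda_\natural$ transforms inhomogeneously (by a Schwarzian-type term) under the M\"obius changes \eqref{Mobius}, so the chart-by-chart equivalences are not obviously compatible. This is exactly where Gunning's extension intervenes: by Theorem~\ref{gun1} the cocycle $\{g^{\hbar}_{\a\b}\}$, with off-diagonal entries $\hbar\cdot\sigma_{\a\b}$, defines the filtered bundle $V_{\hbar}$ of \eqref{ses}, and by Lemma~\ref{gun2} the operator $\nabla^{\hbar}$ of \eqref{nabla} is a genuine global connection on $V_{\hbar}$. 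To finish I would check that the local gauges $g_\a=I+\hbar\cdot(\partial_{z_\a}\log\lambda_\natural)\cdot X_+$ intertwine the transition cocycle $f_{\a\b}=\diag(\xi_{\a\b},\xi_{\a\b}^{-1})$ of $E_0$ with the Gunning cocycle $\{g^{\hbar}_{\a\b}\}$, the inhomogeneous part of the transformation of $b$ being absorbed precisely by $\hbar\cdot\sigma_{\a\b}$. Combined with the projective-coordinate computation of Section~\ref{Deligne}, which shows that in a M\"obius coordinate system (Definition~\ref{gun}) the associated second order operator $\frac{d^2}{dx^2}-q$ descends to a twisted $D$-module acting on $K_C^{-\half}$, this identifies the scaling limit $D(\hbar)$ with a globally defined $SL_2(\bC)$-oper on $C$.
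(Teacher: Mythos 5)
Your proposal is correct and follows essentially the same route as the paper: you arrive at the identical unipotent gauge $g=I+\hbar\,(\partial\log\lambda_{\natural})\,X_+=e^{\hbar\,\partial\log\lambda_{\natural}\cdot X_+}$, your $(1,0)$-part identity $\partial_z b=(\partial_z\log\lambda_{\natural})\,b$ is exactly the paper's relation $2(\partial\lambda_{\natural})^2-\lambda_{\natural}\,\partial^2\lambda_{\natural}=0$, and the globalization via the Gunning cocycle of Theorem~\ref{gun1} and Lemma~\ref{gun2} is the same. The only difference is organizational (you solve the intertwining equation by bidegree to derive $g$, whereas the paper posits $g$ and conjugates term by term with the adjoint formula), which does not change the substance.
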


\begin{proof}
The claim is that
$\nabla^{\hbar}=g \cdot D(\hbar) \cdot g^{-1}$ 
with the gauge transformation 
$$
g=\begin{bmatrix}
1 & \hbar\cdot \partial \log \lambda_{\natural}\\
0 & 1
\end{bmatrix}
=e^{\hbar \cdot \partial\log\lam_\natural\cdot  X_+}.
$$
For brevity, let us denote $a=\partial \log \lambda_{\natural}$ and $b=\lambda_{\natural}^2$.
Then
$$
D(\hbar) =d+ \frac{1}{\hbar} \cdot dz \cdot
(X_-+q \cdot X_+)-a\cdot dz \cdot H +\hbar \cdot b \cdot d\bar{z} \cdot X_+,
\qquad g =   e^{\hbar \cdot a \cdot X_+}.
$$ 
Recall that for any elements $A$ and $B$ of a Lie 
algebra and a central parameter $\epsilon$, we have 
the adjoint formula
$$
e^{\epsilon A} \cdot B \cdot e^{-\epsilon A}
=\sum_{n=0}^\infty \frac{1}{n!} \epsilon^n \cdot
\ad^n_A (B),
$$
where
$$
\ad^n_A (B): =\overset{n}
{\overbrace{[A,[A,[\cdots,[A}},B]\cdots]]].
$$
From the commutation relations of the basis for
$sl_2(\bC)$ , 
we see that 
$$
g \cdot X_- \cdot g^{-1} = X_- + \hbar \cdot a \cdot [X_+,X_-]
+ \half (\hbar \cdot a)^2 \cdot [X_+,[X_+,X_-]]
=X_-+ \hbar \cdot a \cdot H - (\hbar \cdot a)^2 \cdot X_+.
$$
Therefore, 
$$
g\cdot \frac{1}{\hbar} \cdot dz \cdot
(X_-+q \cdot X_+)\cdot g^{-1}=
 \left(\frac{1}{\hbar}\cdot (X_-+q \cdot X_+)
 +a\cdot H -\hbar \cdot a^2 \cdot X_+
\right)\cdot dz.
$$
Similarly, 
$$
g\cdot H \cdot g^{-1} = H +\hbar\cdot a \cdot [X_+,H]  =H -2\hbar \cdot a \cdot X_+,
$$
which yields
$$
-g\cdot a \cdot dz\cdot H\cdot g^{-1} =- a \cdot dz \cdot H + 2\hbar \cdot a^2 \cdot dz \cdot X_+.
$$
It is obvious that 
$$g\cdot \hbar \cdot b \cdot  d\bar{z} \cdot  X_+\cdot 
g^{-1} = \hbar \cdot b \cdot d\bar{z} \cdot X_+.
$$
Finally, we calculate
\begin{align*}
g\cdot d\cdot g^{-1} &= d -\hbar \cdot da \cdot X_+ 
\\
&=d-\hbar \cdot (\partial a \cdot dz+\bar{\partial} \cdot a \cdot d\bar{z}) \cdot X_+
\\
&= d-\hbar \cdot
dz \cdot \partial^2 \log \lam_\natural \cdot X_+
-\hbar \cdot d\bar{z} \cdot \bar{\partial}\partial \log \lam_\natural \cdot
X_+
\\
&=d +\hbar \cdot dz \cdot \frac{(\partial \lam_\natural)^2 -
\lam_\natural \cdot \partial^2\lam_\natural}{\lam_\natural^2} \cdot X_+
- \hbar \cdot d\bar{z} \cdot \lam_\natural^2 \cdot X_+,
\end{align*}
where we used the constant curvature property
\eqref{curvature} in the last step. 

Adding all together, we obtain the gauge 
transformation formula
\begin{align*}
g \cdot D(\hbar)\cdot g^{-1} &=
d + \left(\frac{1}{\hbar} \cdot (X_-+q \cdot X_+)
 +\partial \log\lam_\natural \cdot H -\hbar  \cdot
 (\partial\log\lam_\natural)^2 \cdot X_+
\right) \cdot dz+
\\
&\qquad
+ \hbar \cdot dz \cdot \frac{(\partial \lam_\natural)^2 -
\lam_\natural \cdot \partial^2\lam_\natural}{\lam_\natural^2} \cdot X_+
+ (\hbar \cdot b \cdot d\bar{z}-\hbar \cdot d\bar{z} \cdot \lam_\natural^2) \cdot
 X_+ -\\
&\qquad
-\partial \log \lam_\natural \cdot dz \cdot H +
2 \hbar \cdot (\partial \log\lam_\natural)^2\cdot dz \cdot X_+
\\
&=
d + \frac{1}{\hbar} \cdot (X_-+q \cdot X_+)\cdot dz 
+\hbar \cdot dz \cdot \frac{2 \cdot(\partial \lam_\natural)^2 -
\lam_\natural \cdot \partial^2\lam_\natural}{\lam_\natural^2} \cdot X_+
\\
&=
d+\frac{1}{\hbar}\cdot \phi(q).
\end{align*}
Here, we use the fact that
$2 \cdot (\partial \lam_\natural)^2 -
\lam_\natural \cdot \partial^2\lam_\natural=0$, which 
follows from the expression \eqref{natural}.
\end{proof}

The computation we have shown above 
is only for $SL_2(\bC)$. Yet it is valid for
proving vastly general Theorem~\ref{DFKMMN}. 
The key idea is to use Kostant's principal TDS
\cite{Kostant}, replacing the basis 
$\la H,X_\pm\ra$ by the one for TDS.
Then almost  exactly the  same formulas hold for the
general situation. Here, we only
indicate the oper we obtain 
through Gaiotto's scaling limit for the case of
$SL_r(\bC)$-Higgs bundles. We will give a geometric definition of \textit{oper} that generalizes Gunning's Definition encaptured by Theorem \ref{gun1} from rank two to arbitrary rank $r$.

\begin{Def}[Beilinson-Drinfeld, 1993]
\label{SLr oper}
 Let $V$ be a holomorphic vector bundle of rank $r$
and degree $0$. An $SL_r(\mathbb{C})$-oper is a pair $(V, \nabla)\in \cM_{\deR}$ satisfying the 
following conditions:
\begin{enumerate}
\item There is a global filtration
$0=F_{r}\hookrightarrow F_{r-1}\hookrightarrow \ldots \hookrightarrow F_{0}=V$ in $V$.

\item Griffiths transversality.
The connection $\nabla$ induces a map
$\nabla|_{F_{i+1}}:F_{i+1}\rightarrow F_{i}\otimes K_{C}$  for every $i=0, \dots,r-1$.

\item $\nabla|_{F_{i+1}}$ 
induces  an $\mathcal{O}_{C}$-linear isomorphism $F_{i+1}/F_{i+2}\cong F_{i}/F_{i+1} \otimes K_C$
for every $i$.
\end{enumerate}
\end{Def}

Let $(E_0,\phi(q))$ be a point on the
Hitchin section of Section~\ref{hitchin section}.
Then Gaiotto's scaling limit produces an 
$\hbar$-family of opers $(V_{\hbar}, \nabla^{\hbar})$
defined as follows. First, we choose once and 
for all the M\"obius coordinate system
associated with the uniformization mentioned above.

\begin{itemize}

\item $V_{\hbar}$ is given by 
the transition function
$\{f_{\a\b}^{\hbar}\}$, where 
$$
f_{\a\b}^{\hbar}=\exp(H \cdot \log\xi_{\a\b}) \cdot
\exp\left(\hbar \cdot \frac{d\log \xi_{\a\b}}{dz_{\b}}\cdot X_{+}\right).
$$

\item The connection is defined by
$$
\nabla^\hbar := d + \frac{1}{\hbar} \cdot \phi(q).
$$
Note that this definition is globally valid with
respect to a M\"obius coordinate system.

\item $V_{0}=K_C^{\frac{r-1}{2}}\oplus \ldots \oplus K_C^{-\frac{r-1}{2}}=E_0$,  since $f_{\a\b}^{\hbar=0}=\exp(H\cdot \log\xi_{\a\b})=\xi_{\a\b}^{H}$.

\item There is a unique filtration in the vector bundle $V_{\hbar}$ with $F_{r-1}=K_C^{\frac{r-1}{2}}$
that satisfies the conditions  of 
Definition~\ref{SLr oper}.

\item The vector bundles $V_{\hbar}$ 
are isomorphic for all $\hbar\ne 0$.

\end{itemize}

We refer to \cite{OD21,OD22} for more detail.

\section{Conclusion} 

We emphasize again  that the nonabelian Hodge correspondence is a diffeomorphism between 
$\cM_{\Dol}$ and $\cM_{\deR}$. 
 (\cite{D,H1,S}). Conjecture \ref{dgaiotto} of
Gaiotto  realizes a \textbf {holomorphic point by point correspondence} between two holomorphic Lagrangians,  the {\it Hitchin section} 
in $\cM_{\Dol}$ and the  {\it moduli space of opers}
in $\cM_{\deR}$. Since the 
quantum curve should depend holomorphically
on the spectral curve, we consider 
the Gaiotto correspondence as the
desired  construction 
of quantum curves. 

\begin{itemize}
\item {\color{red} Donaldson, Hitchin, Simpson} Nonabelian-Hodge correspondence

$${\color{blue} \mathcal{M}_{\text{Dol}}\stackrel{\text{diffeomorphism}}{\longrightarrow}\mathcal{M}_{\text{deR}}}$$

\item {\color{red} Gaiotto's conjecture} gives a holomorphic correspondence between Lagrangians 

$$
\text{\color{blue} Hitchin Section $\stackrel{\text{holomorphic}}{\longrightarrow}$ moduli space of opers}
$$
\end{itemize}

Quantization is never unique. Yet the Catalan 
example we presented earlier clearly 
shows why we are interested in 
the unique process of quantization. A quantum curve
quantizes the B-model geometry, which provides
a generating function of genus $g$ A-models for
all $g$. Thus we wish a unique quantization result.

Starting from a Hitchin spectral curve, we identify the
Higgs bundle on the Hitchin section. This is 
unique, once the spin structure of the curve
$C$ is chosen. Then the correspondence 
given by Gaiotto's scaling limit constructs, again, a 
unique oper in the moduli space of holomorphic
connections on $C$. Thus the process from the
spectral curve to the quantum curve (oper) is unique, and
depends holomorphically on the moduli of spectral
curves, when the complex structure of $C$ is 
fixed.

We present below a local
picture of the two Lagrangians inside the Dolbeault moduli space together with their images under the nonabelian-Hodge and Gaiotto's correspondences.
In the figure, $V_1 = V_\hbar|_{\hbar = 1}$.
The picture does not show the global 
relations between various Lagrangians in the
moduli spaces. For example, the $SL_r(\bR)$-Hitchin
component in $\cM_{\deR}$ and the oper moduli 
space intersects infinitely many times. Only locally
they intersect at a point, here at $(V_1,d+X_-)$. 

\vspace{0.5 in}
\begin{figure}[htb]

\includegraphics[width=4.8in]{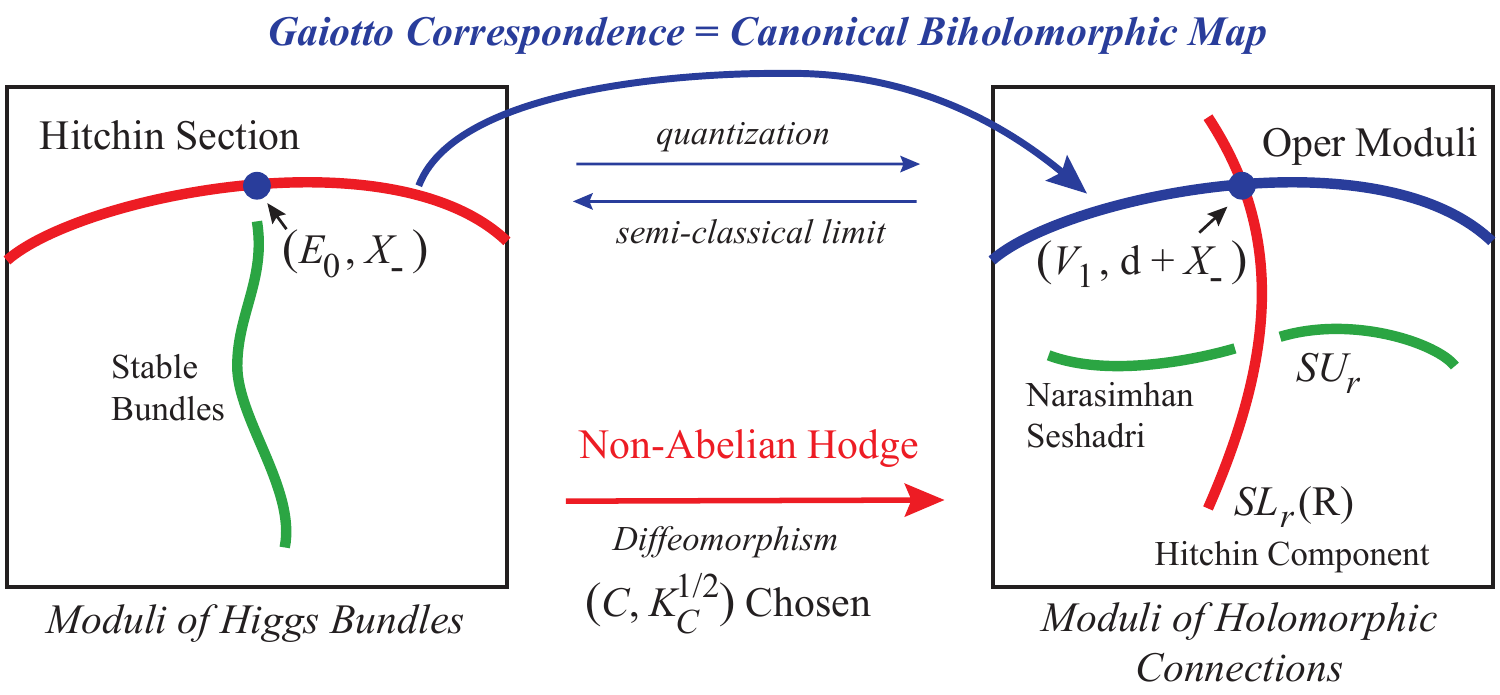}

\end{figure}

\vfill


\bibliographystyle{amsplain}

\end{document}